\title{Variance bounding of delayed-acceptance kernels} 
\author{Chris Sherlock$^1$ and Anthony Lee$^2$} 
\date{{\small $^1$Department of Mathematics and Statistics, Lancaster
    University, Lancaster, LA1 4YF, UK; ORCID: 0000-0002-2429-3157; c.sherlock@lancaster.ac.uk.\\
$^{2}$School of Mathematics, Fry Buiding, University of Bristol, Bristol, BS8 1UG, UK; ORCID: 0000-0001-7765-0616.\\}}
\newtheorem{theorem}{Theorem}
\newtheorem{proposition}{Proposition}
\newtheorem{lemma}{Lemma}
\newtheorem{corollary}{Corollary}
\theoremstyle{definition}
\newtheorem{definition}{Definition}
\newtheorem{example}{Example}
\newcommand{\pihat}{\hat{\pi}}
\newcommand{\qhat}{\hat{q}}
\newcommand{\md}{\mbox{d}}
\newcommand{\Prob}[1]{\mathbb{P}\left({#1}\right)}
\newcommand{\Probs}[2]{\mathbb{P}_{{#1}}\left({#2}\right)}
\newcommand{\Expects}[2]{\mathbb{E}_{{#1}}\left[{#2}\right]}
\newcommand{\Norm}[1]{\left|\left|{#1}\right|\right|}
\newcommand{\Abs}[1]{\left|{#1}\right|}
\newcommand{\cip}{\stackrel{p}{\rightarrow}}
\newcommand{\setA}{\mathcal{A}}
\newcommand{\setB}{\mathcal{B}}
\newcommand{\setC}{\mathcal{C}}
\newcommand{\setD}{\mathcal{D}}
\newcommand{\setF}{\mathcal{F}}
\newcommand{\setM}{\mathcal{M}}
\newcommand{\setX}{\mathcal{X}}
\newcommand{\Bbar}{\overline{B}}
\newcommand{\asvar}{\mbox{\textsf{var}}}
\newcommand{\PDA}{\tilde{\mathsf{P}}}
\newcommand{\PMH}{\mathsf{P}}
\newcommand{\alphaDA}{\tilde{\alpha}}
\newcommand{\alphaMH}{\alpha}
\newcommand{\alphabarDA}{\overline{\widetilde{\alpha}}}
\newcommand{\alphabarMH}{\overline{\alpha}}
\newcommand{\rDA}{\mathsf{r}}
\newcommand{\rMH}{\mathsf{r}}
\begin{document}
\maketitle

\begin{abstract}
A delayed-acceptance version of a Metropolis--Hastings algorithm can
be useful for Bayesian inference when it is computationally expensive
to calculate the true posterior, but a computationally cheap
approximation is available;
the delayed-acceptance
kernel targets the same posterior as its associated ``parent'' Metropolis-Hastings
kernel. Although the asymptotic variance of the ergodic average of any functional of the delayed-acceptance chain
cannot be less than that obtained using its parent, the
average computational time per iteration can be much smaller and so 
for a given computational budget the delayed-acceptance kernel can be
more efficient.  

When the asymptotic variance of the ergodic averages of all $L^2$ functionals of the chain are
finite,  the kernel is said to be variance bounding. 
It has recently been noted that a delayed-acceptance kernel need not
be variance bounding even when its parent 
is.  
We provide sufficient conditions for inheritance:
for non-local algorithms, such as the independence sampler,
 the discrepancy between the log density of the approximation and that of the truth should be bounded; for local
algorithms, two alternative sets of conditions are
provided.
 
As a by-product of our initial, general result we also supply sufficient
conditions on any pair of proposals such that, for any shared target distribution, if a Metropolis-Hastings
kernel using one of the proposals is variance bounding
then so is the Metropolis-Hastings kernel using the other proposal.
\end{abstract}

\textbf{Keywords:} Metropolis-Hastings; delayed-acceptance; variance bounding; conductance; geometric ergodicity.

\textbf{AMS:}Primary: {60J10}; Secondary: {65C40;47A10} 

\textbf{Declarations:} funding - none; conflicts of interest - none; availability of data and material - n/a; code availability - R code to produce the plots in Section 5 is available from \url{https://chrisgsherlock.github.io/Research/publications.html}.

\section{Introduction} 

\label{sect.intro}
The Metropolis-Hastings (MH) algorithm is widely used
to approximately compute expectations with respect to
complicated high-dimensional posterior distributions
\cite[e.g.][]{Gilks/Richardson/Spiegelhalter:1996, MCMChandbookChOne}. The algorithm requires
that it be possible to evaluate point-wise the density of the distribution of interest (throughout this article, all densities are with respect to Lebesgue measure) 
up to an arbitrary constant of
proportionality.

In many problems the target posterior density is computationally expensive
to evaluate. When a computationally-cheap approximation, or surrogate,
is available, the delayed-acceptance Metropolis-Hastings
(DAMH) algorithm 
\cite[][also known as the two-stage algorithm, and a special case of the surrogate-transition method]{liu2001monte,ChristenFox:2005,MCMCHandbookDA}
leverages the surrogate to produce a new Markov chain that still targets the original distribution of interest.
A first `screening' stage substitutes the surrogate density for 
the true density in the standard formula for
the MH acceptance probability; proposals which fail at this stage
are discarded. Only proposals that pass the first stage are
considered in the second `correction' stage, where it is necessary to
evaluate the true posterior density at the proposed value.

Delayed acceptance (DA) algorithms
have been applied in a variety of settings with the approximate density obtained in a variety of different ways, for example:
 a coarsening of a numerical
grid in Bayesian inverse problems
\cite[]{ChristenFox:2005,Moulton/etal:2008,cui2011bayesian}, subsampling from
big-data \cite[]{PayneMallick:2014,BGLR:2015,Quiroz:2015}, a tractable
approximation to a
stochastic process \cite[]{Smith:2011,GolightlyHendersonSherlock:2013}, or
a direct, nearest-neighbour approximation to the truth using previous
values \cite[]{SGH:2015}.

For a Markov kernel, $P$, with a stationary distribution of $\pi$, and an associated chain $\{X_t\}_{t=1}^\infty$, the
\emph{asymptotic variance} of any functional, $h$, is defined to be
\[
\asvar(h,P):=\lim_{n\rightarrow \infty} n\mbox{Var}\left[\frac{1}{n}\sum_{i=1}^n h(X_i)\right],
\]
where $X_1\sim \pi$. A lower asymptotic variance is thus associated,
in practice, with a
greater accuracy in estimating $\Expects{\pi}{h(X)}$ using a
 realisation of length $n>>1$ from the distribution of the chain.
In terms of the asymptotic variance of any functional of the chain, 
the DAMH kernel cannot be more efficient than the parent
MH kernel; however the
computational cost per iteration is, typically, reduced considerably. The
almost-negligible computational cost of the screening stage also, typically, facilitates proposals that
have a larger chance of being rejected than the MH proposal, but where
the pay-off on acceptance is so much larger that the expected
overall movement per unit of time increases. When efficiency is measured in terms
of effective samples per second, gains of over an order of magnitude
have been reported \cite[e.g.][]{GolightlyHendersonSherlock:2013}.

A Markov kernel $P$ with a stationary distribution of $\pi$ is termed
\textit{variance bounding} if $\asvar(h,P)<\infty$ for all 
$h\in L^2(\pi)$, the Hilbert space of functions that are
square-integrable with respect to $\pi$. Equivalently there exists $K<\infty$ such that $\asvar(h,P)\le K \mbox{Var}_{\pi}[h]$ for all such $h$.
 This property was named and 
studied in \cite{RobRos:2008}, where it was shown to be equivalent to the existence of a `usual'
central limit theorem (CLT); that is, a CLT where the limiting
variance is the asymptotic variance.

Intuitively, the variance-bounding property embodies desirable behaviour for a chain started at equilibrium. In practice, the chain is not started at equilibrium, but asymptotically the bias that results from this is negligible compared with the variance. An alternative natural requirement is that the chain converge to equilibrium geometrically quickly (rather than, say, polynomially quickly). 
A Markov chain kernel, $P$, with stationary distribution $\pi$ is
\emph{geometrically ergodic} \cite[e.g.][Chapter 15]{RnR1997,RobRos:2004,MeynTweedie1993} if there exist 
$\rho>0$ and $M:\setX\rightarrow [0,\infty)$ 
that is finite $\pi$-almost everywhere, such that
\[
  |P^n(x,\setA)-\pi(\setA)|\le M(x)\rho^n
\]
 for all
$\setA\in\setF, ~x\in\setX$ and $n\in\mathbb{N}$, where $P^n$ denotes the $n$-step transition kernel.

Although the motivations behind the definitions of variance bounding and geometric ergodicity, mixing at equilibrium and convergence to equilibrium, are quite different, for a large class of algorithms, including those studied in this article, these two properties are very closely linked as we will describe in Section \ref{sec.KeyTerminImplic}. Indeed, for delayed-acceptance algorithms, under weak conditions the two properties are equivalent (see Proposition \ref{prop.GEtonoGEbad}). 

Theoretical properties of the efficiency of delayed-acceptance algorithms have
been studied in \cite{BGLR:2015}, \cite{STG:2015} and \cite{FranksVihola2020}. The first contribution from \cite{BGLR:2015} is an example delayed-acceptance algorithm which fails to inherit geometric ergodicity from its parent Metropolis-Hastings algorithm (see Example \ref{example.BGLR} in
Section \ref{sect.algorithms} of this article); a simple sufficient condition for inheritance of geometric ergodicity, uniformly good behaviour of the ratios $\rDA_1$ and $\rDA_2$ that we define in \eqref{eqn.define.rrr}, is also supplied. Finally, an idealised setting where the cheap approximation is perfectly accurate is explored to obtain tuning guidelines for $\lambda$ in the delayed-acceptance random walk Metropolis algorithm. \cite{STG:2015} examines this tuning issue further, proving a limiting diffusion for the first component of the delayed-acceptance Markov chain, and providing robust tuning guidelines that account for the error in the cheap approximation; the article then extends these guidelines to the pseudo-marginal version of the algorithm. Finally, \cite{FranksVihola2020} compares the asymptotic variance of a general pseudo-marginal delayed-acceptance algorithm with the variance of an algorithm that applies importance-sampling to the output of an MCMC algorithm targeting the cheap approximation directly. 

Using our Proposition \ref{prop.GEtonoGEbad}, the lack of inheritance of geometric ergodicity in the example in \cite{BGLR:2015} is equivalent to a lack of inheritance of the variance bounding property:  
even though the asymptotic variance using the parent MH kernel is
finite for all $h \in L^2(\pi)$, there exist $h\in L^2(\pi)$ 
for which the asymptotic variance using the DA kernel is infinite. For such $h$, estimated quantities such as effective sample size \cite[e.g.][]{Hoff2009} are invalid, and consequent, standard CLT-based intuitions about the sizes of typical errors in estimates of $\mathbb{E}_{\pi}[h]$ from the chain \emph{do not hold}.

We investigate the conditions under
which a DAMH kernel inherits 
variance bounding from its MH parent and, as a by product, discover 
conditions under which two different proposals produce MH kernels that
are equivalent in terms of
whether or not they are variance bounding. Section \ref{sect.backg}
provides the background and two motivating examples, while Section \ref{sect.generic}
provides some key definitions, a general inheritance result applicable to all
propose-accept-reject kernels, and sufficient conditions for
variance-bounding equivalence between two
Metropolis-Hastings proposals. Section
\ref{sect.results} contains our results for standard DA algorithms with further illustrative
examples, and includes parent MH algorithms where the proposal depends upon the
form of the density, so
that the proposal for a computationally cheap DA kernel would
naturally depend on the surrogate. Numerical experiments are performed in Section \ref{sec.numerical} and the article concludes with a discussion. All proofs are deferred to Appendix \ref{sect.proofs}.
 
\section{Background, notation and motivation}
\label{sect.backg}
Throughout this article all Markov chains are assumed to be on a 
statespace $(\setX,\setF)$, with $\setX\subseteq \mathbb{R}^d$
Lebesgue measurable, and
$\setF$ the $\sigma$-algebra of all Lebesgue-measurable sets in
$\setX$. The target and surrogate distributions are denoted by $\pi$ and $\pihat$, respectively, and they  are assumed to have densities of $\pi(x)$ and $\pihat(x)$ with
respect to Lebesgue measure. 

\subsection{Metropolis-Hastings and delayed-acceptance kernels}
The Metropolis-Hastings kernel has a proposal density 
$q(x,y)$ and an acceptance probability $\alphaMH(x,y)=1\wedge \rMH(x,y)$
where
\begin{equation}
  \label{eqn.define.r}
\rMH(x,y) := \frac{\pi(y)q(y,x)}{\pi(x)q(x,y)}.
\end{equation}
With $\alphabarMH(x):=\int \alpha(x,y) q(x,y) \md y$,
the Metropolis-Hastings (MH) kernel is then
\begin{equation}
\label{eqn.PMH}
\PMH(x,\md y):=q(x,y)\md y~
\alphaMH(x,y)+[1-\alphabarMH(x)]\delta_{x}(\md y).
\end{equation}
An iteration of the corresponding MH algorithm proceeds from a current value, $x$, to the next value, $y$, as follows. A value $x'$ is sampled from the distribution with a density of $q(x,x')$. With a probability of $\alpha(x,x')$, $y\leftarrow x'$, else $y\leftarrow x$.

Now, suppose that we have an approximation, $\pihat(x)$, to
$\pi(x)$. The standard delayed-acceptance
kernel uses the same proposal, $q(x,y)$, but has an
acceptance probability of 
$\alphaDA(x,y)=[1\wedge \rDA_1(x,y)][1\wedge \rDA_2(x,y)]$, where
\begin{equation}
  \label{eqn.define.rrr}
\rDA_1(x,y):=\frac{\pihat(y)q(y,x)}{\pihat(x)q(x,y)}
~~~\mbox{and}~~~
\rDA_2(x,y):=\frac{\pi(y)/\pihat(y)}{\pi(x)/\pihat(x)}.
\end{equation}
With $\alphabarDA(x):=\int \alphaDA(x,y) q(x,y) \md y$, the
delayed-acceptance (DA) kernel is
\begin{equation}
\label{eqn.PDA}
\PDA(x,\md y):=q(x,y)\md y
~\alphaDA(x,y)+[1-\alphabarDA(x)]\delta_{x}(\md y).
\end{equation}
An iteration of the corresponding DA algorithm proceeds from a current value, $x$, to a next value, $y$, as follows.
\begin{description}
\item[Stage One:] A value $x'$ is sampled from the distribution with a density of $q(x,x')$. With a probability of $1\wedge \rDA_1(x,x')$ the algorithm proceeds to Stage Two, else $y\leftarrow x$.
  \item[Stage Two:] With a probability of $1\wedge \rDA_2(x,x')$, $y\leftarrow x'$, else $y\leftarrow x$.
\end{description}
Now, $\alphaDA(x,y)\le \alphaMH(x,y)$, and so
 $\asvar(h,\PDA)\ge \asvar(h,\PMH)$ for each $h\in L^2(\pi)$ \cite[]{Peskun:1973,Tierney:1998}. At first glance this might suggest that the DA algorithm is never worthwhile; however for any proposal that is rejected at Stage One there is no need to complete the expensive calculation of $\pi(x')$ that is required at every iteration of the MH algorithm and in Stage Two of the DA algorithm. As mentioned in the Introduction, for a fixed computational time, the decreased average computational cost per iteration, and alterations of any tuning parameters to take advantage of this, can lead to a DA algorithm where the variance of an estimator can be over an order of magnitude smaller than that of the MH algorithm.

Since $\asvar(h,\PDA)\ge \asvar(h,\PMH)$, if $\PDA$ is variance
bounding then so is $\PMH$; however
it is feasible that $\PMH$ may be variance bounding while $\PDA$ is not.

\subsection{Key terminology, equivalences and implications}
\label{sec.KeyTerminImplic}
The MH and DA kernels are both \emph{reversible} with respect to the target. A kernel $P$ is reversible with respect to a distribution $\pi$ iff for all $\setA\in \setF$ and $\setB \in \setF$, 
$\int_{\setA}\pi(\md x){P}(x,\setB)
=
\int_{\setB}\pi(\md x) {P}(x,\setA).
$
This article utilises a number of existing results for reversible Markov chains on the relationship between variance bounding, conductance, spectral gaps and geometric ergodicity. Here we define conductance and spectral gaps and summarise the relationships between the four properties.

Define the Hilbert space $L_0^2(\pi)=\{f:\mathcal{X}\rightarrow \mathbb{R}; \Expects{\pi}{f(X)}=0,~\Expects{\pi}{f^2(X)}<\infty\}$ with the inner product $\langle f, g \rangle=\int_{\mathcal{X}}f(x)g(x)\pi(\md x)$, and consider $P$ as an operator acting on $L_0^2(\pi)$ according to $(Pf)(x)=\int_{\mathcal{X}} P(x,\md y) f(y)$. If $P$ is reversible then it is a self adjoint operator on $L_0^2(\pi)$ and by the spectral theorem for bounded self-adjoint operators, for each $f\in L_0^2(\pi)$, $\langle f,P^nf\rangle =\int_{-1}^1 \lambda^nH_{f}(\md \lambda)$ for some positive measure $H_f$ on $[-1,1]$. Let
\begin{equation}
  \label{eqn.var.rep.of.gaps}
r_1:=\inf_{f\in L_0^2(\pi)}\frac{\langle f,Pf\rangle}{\langle f,f\rangle}\ge -1
~~~\mbox{and}~~~
r_2:=\sup_{f\in L_0^2(\pi)}\frac{\langle f,Pf\rangle}{\langle f,f\rangle}\le 1,
\end{equation}
or, equivalently \cite[e.g.][p320, Theorem 2]{Yosida1980}, that the smallest closed interval containing the support of $H_f$ for all $f\in L_0^2(\pi)$ is $[r_1,r_2]$. The \emph{spectral gap} of $P$ is $1-\max(|r_1|,|r_2|)$ \cite[e.g.][]{Geyer1992,RnR1997}, the \emph{right spectral gap} is $1-r_2$ and the \emph{left spectral gap} is $1+r_1$. $P$ is said to have a spectral gap (or a left or right spectral gap) if its spectral gap (or left or right spectral gap) is non-zero.

For any set $\setA\in \setF$ with $\pi(\setA)>0$ consider the probability of leaving 
$\setA$ at the next iteration given that the stationary chain is currently in $\setA$:
\[
\kappa(\setA):=
\frac{1}{\pi(\setA)}
\int_{\setA}P(x,\setA^c)\pi(\md x).
\]
The \emph{conductance}, $\kappa$ for a Markov kernel $P$ with invariant measure
$\pi$ is then \cite[e.g.][]{LawlerSokal:1988} \cite[see also][]{JerrumSinclair1988}
\[
\kappa:=\inf_{\setA:0<\pi(\setA)\le 1/2} \kappa(\setA).
\]

For any reversible Markov chain we have the following relationships:
\begin{align*}
  P\mbox{ is variance bounding}
  &\Leftrightarrow
  P\mbox{ has a right spectral gap}~~~~~~~\mbox{(Thrm 14 of \cite{RobRos:2008})}\\
  &\Leftrightarrow
  P\mbox{ has a positive conductance}~~~\mbox{(Thrm 2.1 of \cite{LawlerSokal:1988})}\\
  &\Leftarrow
  P\mbox{ has a spectral gap}\\
  &\Leftrightarrow
  P\mbox{ is geometrically ergodic}~~~~~~~\mbox{(Thrm 2.1 of \cite{RnR1997})}.
  \end{align*}
These relationships will be used repeatedly in the sequel without further reference.
 




\subsection{Example algorithms}
\label{sect.algorithms}
To exemplify our theoretical results we will consider four specific, frequently-used
MH algorithms.
\begin{enumerate}
\item The Metropolis-Hastings independence sampler (MHIS): $q(x,y)=q(y)$.
\item The random walk Metropolis (RWM): $q(x,y)=q(x-y)=q(y-x)$; \emph{e.g.}
  $q(x,y)=\mathsf{N}(y;x,\lambda^2 I)$.
\item The Metropolis-adjusted Langevin algorithm (MALA): 
$q(x,y)=\mathsf{N}(y;x+\frac{1}{2}\lambda^2\nabla \log \pi,\lambda^2 I)$.
\item The truncated MALA: 
\begin{equation}
\label{eqn.TMALA.prop}
q(x,y)=\mathsf{N}\left(y;x+\frac{1}{2}\lambda^2R(x),\lambda^2I\right),
~\mbox{where}~ 
R(x)=\frac{D\nabla \log \pi}{D\vee \Norm{\nabla \log   \pi}},
\end{equation}
for some $D>0$.
\end{enumerate}
In Proposals of type 2, 3 and 4, $\lambda$ is often referred to as the scale parameter of the proposal. 
The MHIS and RWM have been used since the early days of MCMC
\cite[e.g.][]{Tierney:1994}; conditions under which they
are geometrically ergodic (and, hence, variance bounding) have been well studied; see, for example,
\cite{Liu:1996} and \cite{MengTweed:1996} for the MHIS and
\cite{MengTweed:1996}, \cite{RobTweed1996RWM} and \cite{JarnHans2000}
for the RWM. Essentially, for the MHIS the proposal, $q$, must not
have lighter tails than the target, and for the RWM the target must
have suffiently smooth and exponentially decreasing tails. The MALA
was introduced in \cite{Besag1994} and was analysed in
\cite{RobTweed1996MALA}, in which the truncated MALA was also
introduced. The MALA can be much
more efficient than the RWM in moderate to high dimensions. As with
the RWM, for geometric ergodicity the MALA 
requires exponentially decreasing tails, but if the tails decrease too
quickly, $\Norm{\nabla \log \pi}$ grows too quickly and 
the MALA can fail to be geometrically ergodic. The truncated MALA
 circumvents this problem. 

In \cite{BGLR:2015} it is shown that the geometric ergodicity of an RWM algorithm need not be inherited
by the resulting DA algorithm. 

\begin{example}
\label{example.BGLR}
\cite[]{BGLR:2015}
Let $\setX=\mathbb{R}$ with $\pi(x)\propto e^{-x^2/2}$ and
$q(x,y)\propto e^{-(y-x)^2/(2\lambda^2)}$. If $\pihat(x)\propto
e^{-x^2/(2\sigma^2)}$, with $\sigma^2<1$ then $\PDA$ is not
geometrically ergodic. 
\end{example} 
The following conditional equivalence (proved in Section \ref{sec.proof.background}) is used throughout the sequel. If the parent kernel is geometrically ergodic then the DA kernel must have a left spectral gap, and with this constraint geometric ergodicity and variance bounding are equivalent.

\begin{proposition}
\label{prop.GEtonoGEbad}
Let $\PMH$ be a MH kernel targeting $\pi$ as specified in
\eqref{eqn.PMH}. Let $\PDA$ be the DA kernel derived from this
through the approximation $\pihat$ as in \eqref{eqn.PDA}. If $\PMH$
is geometrically ergodic then
$$
\PDA\mbox{ is geometrically ergodic }\iff~\PDA\mbox{ is variance bounding.}
$$
\end{proposition}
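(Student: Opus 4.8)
The plan is to show that the failure of geometric ergodicity for $\PDA$ cannot be blamed on its left spectral gap: I will argue that $\PDA$ necessarily inherits a strictly positive left spectral gap from its geometrically ergodic parent $\PMH$. Both kernels are reversible with respect to $\pi$, so geometric ergodicity is equivalent to the existence of a two-sided spectral gap, whereas by \cite{RobRos:2008} variance bounding is equivalent to the existence of a right spectral gap alone. Hence, once a left gap for $\PDA$ is secured, the assumed lack of geometric ergodicity must be attributable to the absence of a right gap, which is exactly the failure of variance bounding.

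The key computation is a Dirichlet-type representation of the symmetrised quadratic form. For any reversible propose-accept-reject kernel of the form $P(x,\md y)=q(x,y)\alpha(x,y)\,\md y+[1-\bar\alpha(x)]\delta_x(\md y)$, writing $A(x,y):=\pi(x)q(x,y)\alpha(x,y)$ (a symmetric kernel, by detailed balance) and using $\int A(x,y)\,\md y=\pi(x)\bar\alpha(x)$, a short manipulation gives, for every $h\in L^2(\pi)$,
\[
\langle (I+P)h,h\rangle_\pi = 2\Norm{h}^2 - \tfrac12\int\!\!\int (h(x)-h(y))^2 A(x,y)\,\md x\,\md y,
\]
where $\langle\cdot,\cdot\rangle_\pi$ and $\Norm{\cdot}$ denote the inner product and norm on $L^2(\pi)$. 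The left spectral gap of $P$ equals $1+\inf\mathrm{spec}(P|_{L^2_0(\pi)})=\inf\{\langle(I+P)h,h\rangle_\pi:h\in L^2_0(\pi),\,\Norm{h}=1\}$, with $L^2_0(\pi)$ the mean-zero subspace.

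Applying this to both kernels and comparing is then immediate. Since $\alphaDA(x,y)\le\alphaMH(x,y)$, the associated symmetric kernels satisfy $A_{\PDA}\le A_{\PMH}$ pointwise, so the subtracted nonnegative integral is no larger for $\PDA$, yielding $\langle(I+\PDA)h,h\rangle_\pi\ge\langle(I+\PMH)h,h\rangle_\pi$ for all $h\in L^2_0(\pi)$. Intuitively, the extra laziness of the delayed-acceptance kernel pushes its spectrum away from $-1$. Because $\PMH$ is geometrically ergodic it has a strictly positive left spectral gap, i.e.\ $\langle(I+\PMH)h,h\rangle_\pi\ge\delta\Norm{h}^2$ for some $\delta>0$ and all $h\in L^2_0(\pi)$; the comparison transfers this bound to $\PDA$, so $\PDA$ has left spectral gap at least $\delta>0$.

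To finish I combine this with the assumed failure of geometric ergodicity. For reversible $\pi$-irreducible chains, geometric ergodicity is equivalent to the existence of both a right and a left spectral gap. As $\PDA$ possesses a left gap but is not geometrically ergodic, it cannot have a right spectral gap; by \cite{RobRos:2008} this is precisely the statement that $\PDA$ is not variance bounding. The main obstacle is getting the direction of this Peskun-type comparison right: the ordering $\alphaDA\le\alphaMH$ is normally invoked to compare right gaps and asymptotic variances, and here the same ordering must be recognised to \emph{improve} rather than degrade the left gap, which is exactly what makes the parent's left gap transferable to the child.
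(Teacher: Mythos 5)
Your proof is correct and takes essentially the same route as the paper: both arguments transfer the left spectral gap from $\PMH$ to $\PDA$ via the Peskun-type comparison $\alphaDA(x,y)\le\alphaMH(x,y)$, which gives $\mathcal{E}_{\PDA}(f)\le\mathcal{E}_{\PMH}(f)$ for the Dirichlet forms and hence $\mbox{Gap}_L(\PDA)\ge\mbox{Gap}_L(\PMH)>0$, and then conclude that the failure of geometric ergodicity must lie in the right spectral gap, which by Theorem 14 of \cite{RobRos:2008} is exactly the failure of variance bounding. The only cosmetic difference is that you derive the quadratic-form identity $\langle(I+P)h,h\rangle_\pi=2\Norm{h}^2-\mathcal{E}_P(h)$ explicitly, whereas the paper simply quotes the variational representation of the left gap.
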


The original random walk Metropolis algorithm on $\pi(x)$ \textit{is} geometrically
ergodic \cite[]{MengTweed:1996}, and hence variance bounding, so the DA kernel in Example \ref{example.BGLR} has not inherited its parent's desirable properties. As a 
direct corollary of our Theorem
\ref{thrm.heavy} (see Section \ref{sec.DAwithSame}) we find that $\sigma^2\ge 1$ is exactly the right condition in this case:
\begin{example}
\label{example.BGLR.counter}
Let $\setX=\mathbb{R}$ with $\pi(x)\propto e^{-x^2/2}$ and
$q(x,y)\propto e^{-(y-x)^2/(2\lambda^2)}$. If 
$\pihat(x)\propto e^{-x^2/(2\sigma^2)}$, with $\sigma^2\ge 1$ then $\PDA$ is 
variance bounding and geometrically ergodic. 
\end{example} 

Examples \ref{example.BGLR} and \ref{example.BGLR.counter} suggest an
 intuition that problems may arise when $\pihat(x)$
has lighter tails than $\pi(x)$. As we shall see, this is a part of
the story; however, in general, heavier tails are not sufficient to
guarantee inheritance of the variance bounding property, and for a class
of algorithms where heavy tails are sufficient, lighter tails can also
be sufficient provided they are not too much lighter, in a sense we
make precise. 


\section{Variance bounding: inheritance and equivalence}
\label{sect.generic}
Throughout this section we use the following generic formulation for two Markov kernels.

\begin{definition}
  \label{defn.two.generic.kernels}
Let $P_A(x,\md y)$ and $P_B(x,\md y)$ be propose-accept-reject Markov
kernels both targeting a distribution $\pi$, and
using, respectively, 
proposal densities of $q_A(x,y)$ and $q_B(x,y)$ and  
acceptance probabilities of $\alpha_A(x,y)$ and
$\alpha_B(x,y)$.
\end{definition}

Theorem \ref{thrm.main}, below, follows from Lemma \ref{lemma.general}, which is proved in Section \ref{sect.prove.main}. It
generalises Corollary 12 of \cite{RobRos:2008} to allow for different
acceptance probabilities and, more importantly, removes the need for a
fixed, uniform minorisation condition. The minorisation needs only hold in a region $\mathcal{D}(x)$ such that under $P_A$ there is ``unlikely'' to be an accepted proposal in $\mathcal{D}(x)^{\complement}$.

\begin{lemma}
\label{lemma.general}
Let $P_A$, $P_B$, $q_A$, $q_B$, $\alpha_A$ and $\alpha_B$ be as in Definition \ref{defn.two.generic.kernels}, and let the conductances of $P_A$ and $P_B$ be $\kappa_A$ and $\kappa_B$ respectively. 
If $\kappa_A>0$ and there is an $\epsilon<\kappa_A$ and
a $\delta>0$ 
such that for $\pi$-almost all
$x\in\setX$, there is a
region $\setD(x)\in \setF$ such that
\begin{equation}
\label{eqn.genlocal}
\int_{\setD(x)^{\complement}}q_A(x,y)\alpha_A(x,y)\md y\le \epsilon,
\end{equation}
and
\begin{equation}
\label{eqn.genabscts}
y\in \setD(x)\Rightarrow q_B(x,y)\alpha_B(x,y)~\ge~ \delta~ q_A(x,y)\alpha_A(x,y),
\end{equation}
then 
 $\kappa_B \ge (1-\epsilon/\kappa_A) \delta \kappa_A$.
\end{lemma}

If $P_A$ is variance bounding, $\kappa_A>0$; 
 choose an $\epsilon \in (0,\kappa_A)$ and for
each $x$ a corresponding $\setD(x)$ so as to satisfy 
\eqref{eqn.genlocal} and \eqref{eqn.genabscts} to obtain:

\begin{theorem}
\label{thrm.main}
Let $P_A$, $P_B$, $q_A$, $q_B$, $\alpha_A$ and $\alpha_B$ be as in Definition \ref{defn.two.generic.kernels}. If $P_A$ is variance bounding and for any $\epsilon>0$
there is a $\delta>0$ such that for $\pi$-almost all $x\in\setX$ there
is a region $\setD(x)\in \setF$ such that 
\eqref{eqn.genlocal} and
\eqref{eqn.genabscts} hold,  then $P_B$ is also variance bounding.
\end{theorem}

The
relationship between conductance and right spectral gap has recently \cite[]{LeeLatus2014,RudolphSprungk2016} been used in other
contexts to bound the behaviour of one Markov kernel in terms of that
of another.
Lemma \ref{lemma.general} itself shows that condition \eqref{eqn.genlocal} 
  need only hold for a single $\epsilon<\kappa_{A}$; however, since in practice  
$\kappa_{A}$ is
  unlikely to be known, the conditions of Theorem \ref{thrm.main} are more practically
  useful. 

From Section \ref{sect.results} we apply Theorem \ref{thrm.main} to provide sufficient conditions
for a delayed-acceptance kernel to inherit variance bounding from its
Metropolis-Hastings parent. However, if a DA kernel is variance
bounding then so is its parent MH kernel. Thus, the sufficient conditions
in Section \ref{sect.results} imply an \textit{equivalence} between the two kernels
with respect to the variance bounding property. In this section, after
two key definitions, we return, briefly, to this equivalence with
regard to the variance bounding property and
provide sufficient conditions for equivalence (over potential targets) between 
Metropolis-Hastings kernels arising from two different proposal densities.  

The most natural special case of \eqref{eqn.genlocal} in practice is
where the kernel is \textit{uniformly local}, which we define as follows: 

\begin{definition}
\label{def.local}
(\textbf{Uniformly Local}) A proposal is \textit{uniformly local} if, given any 
$\epsilon>0$,
\begin{equation}
\label{eqn.localA}
\exists ~r<\infty~ \mbox{such that}~\mbox{for all}~x\in\mathcal{X},~~~\int_{B(x,r)^c}q(x,y)\md y <
\epsilon.
\end{equation} 
A propose-accept-reject kernel is defined to
be uniformly local when its proposal is uniformly local.
\end{definition}

Here and throughout this article, $B(x,r):=\{y\in\setX:\Norm{y-x}<r\}$ is the open ball of radius
$r$  centred on $x$. In our examples, $\Norm{x}$
  indicates the Euclidean norm, although the results are equally valid
  for other norms such as the Mahalanobis norm.

Control of the ratio
$q(y,x)/q(x,y)$ will also be important and so we define the following.

\begin{definition}
\label{eqn.define.deltaq}
For any proposal density $q(x,y)$,
\[
\Delta(x,y;q):=\log q(y,x) - \log q(x,y).
\]
\end{definition}

Clearly, the RWM is a uniformly local kernel; moreover $\Delta(x,y;q_{\rm RWM})=0$. In
contrast, on any target with unbounded support, the MHIS
cannot be uniformly local; as we shall see, the behaviour of $\Delta$ is
then irrelevant. For the MALA and the
truncated MALA we have:
\begin{proposition}
\label{prop.TMALA.conditions}
\textcolor{white}{.}\\(A) Let $q(x,y)$ be the proposal for the truncated MALA in
\eqref{eqn.TMALA.prop} or for the MALA on a target where 
$\mbox{ess sup}_x \Norm{\nabla \log \pi(x)}= D<\infty$. Then\\ 
(i) For all $x$, $\Probs{q}{\Norm{Y-x}>r}\rightarrow 0$ uniformly in
$x$ as $r\rightarrow \infty$,
so $q$ is \textit{uniformly local}, as defined in \eqref{eqn.localA}.\\
(ii) 
$|\Delta(x,y;q)|\le h(\Norm{y-x}):=D\Norm{y-x} +\lambda^2D^2/8$.\\
(B) The proposal, $q(x,y)$, for the MALA on a target where 
$\mbox{ess sup} \Norm{\nabla \log \pi(x)}= \infty$ 
is not \textit{uniformly local}. 
\end{proposition}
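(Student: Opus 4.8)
The plan is to handle parts (A) and (B) together by first isolating the one structural feature that separates them: in every case covered by (A) the Gaussian proposal has mean $x+\mu(x)$ with a displacement $\mu(x)$ whose norm is \emph{uniformly} bounded, whereas in (B) it is not. For the truncated MALA, $\mu(x)=\frac{1}{2}\lambda^2 R(x)$, and a two-line case split on whether $\Norm{\nabla\log\pi(x)}$ exceeds $D$ shows $\Norm{R(x)}\le D$ for all $x$ (the truncation caps the norm at $D$, and below the threshold $R=\nabla\log\pi$ has norm $\le D$ by hypothesis); for the MALA with $\mbox{ess sup}\Norm{\nabla\log\pi}=D<\infty$ the displacement is $\mu(x)=\frac{1}{2}\lambda^2\nabla\log\pi(x)$, so $\Norm{\mu(x)}\le\frac{1}{2}\lambda^2 D$ for $\pi$-almost all $x$. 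Writing $M:=\frac{1}{2}\lambda^2 D$, this uniform bound is the engine behind (A).

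For (A)(i) I would write $Y-x=\mu(x)+\lambda Z$ with $Z\sim\mathsf{N}(0,I)$, so that $\Norm{Y-x}\le M+\lambda\Norm{Z}$ by the triangle inequality. Hence $\{\Norm{Y-x}>r\}\subseteq\{\Norm{Z}>(r-M)/\lambda\}$ and therefore $\Probs{q}{\Norm{Y-x}>r}\le\Prob{\Norm{Z}>(r-M)/\lambda}$, a bound that is \emph{free of} $x$ and vanishes as $r\to\infty$. This is exactly the claimed uniform convergence, and locality in the sense of Definition \ref{def.local} follows at once by choosing $r$ so that the right-hand side is below any prescribed $\epsilon$.

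For (A)(ii) I would compute $\Delta$ directly from the Gaussian log-density. Setting $u=y-x$ and expanding the squared norms in $\log q(y,x)-\log q(x,y)$ gives $\Delta(x,y;q)=\frac{1}{2\lambda^2}\bigl[-2u\cdot(\mu(x)+\mu(y))+\Norm{\mu(x)}^2-\Norm{\mu(y)}^2\bigr]$. Applying Cauchy--Schwarz to the inner product, the bound $\Norm{\mu(\cdot)}\le M$ to each quadratic term, and the elementary fact that the difference of two numbers in $[0,M^2]$ is at most $M^2$, yields $\Abs{\Delta}\le\frac{1}{2\lambda^2}\bigl(4M\Norm{u}+M^2\bigr)=D\Norm{y-x}+\lambda^2 D^2/8$, which is the stated $h(\Norm{y-x})$.

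For (B) I would negate locality, so it suffices to produce one $\epsilon>0$ (take $\epsilon=1/2$) such that for every $r<\infty$ some $x$ has $\int_{B(x,r)^c}q(x,y)\md y\ge\epsilon$. Projecting onto the unit displacement direction $v=\mu(x)/\Norm{\mu(x)}$ gives $(Y-x)\cdot v=\Norm{\mu(x)}+\lambda Z_1$ with $Z_1\sim\mathsf{N}(0,1)$; since $\Norm{Y-x}\le r$ forces $(Y-x)\cdot v\le r$, we get $\Probs{q}{\Norm{Y-x}\le r}\le\Prob{Z_1\le(r-\Norm{\mu(x)})/\lambda}$, which tends to $0$ as $\Norm{\mu(x)}\to\infty$. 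Because $\mbox{ess sup}\Norm{\nabla\log\pi}=\infty$, for each fixed $r$ the set of $x$ on which $\Norm{\mu(x)}$ is large enough to push this escape probability above $1/2$ has positive Lebesgue measure and is hence nonempty; any such $x$ witnesses the failure of \eqref{eqn.localA}. The proposition is thus essentially a collection of Gaussian estimates, and the only genuinely substantive point --- the main obstacle --- is uniformity: in (A)(i) the dominating tail bound must not depend on $x$, which is precisely why the uniform displacement bound $M$ (and the case analysis giving $\Norm{R(x)}\le D$) is indispensable rather than a merely pointwise bound, while in (B) the mirror-image difficulty is to convert the essential-supremum hypothesis into a genuine \emph{pointwise} violation of locality, which the positive-measure argument supplies.
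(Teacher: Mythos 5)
Your proof is correct and follows essentially the same route as the paper's: the uniform displacement bound $\Norm{\mu(x)}\le\tfrac{1}{2}\lambda^2D$ plus the triangle inequality for (A)(i), the identical expansion of $\Delta(x,y;q)$ with Cauchy--Schwarz for (A)(ii), and for (B) a projection of $Y-x$ onto the drift direction combined with the positive-measure set where $\Norm{\nabla\log\pi}$ exceeds any threshold. The only differences are cosmetic (you phrase (B) via the complement event $\Norm{Y-x}\le r$, and you make the unification of the truncated-MALA and bounded-gradient-MALA cases explicit where the paper does it implicitly).
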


The applicability of Lemma \ref{lemma.general} and Theorem
\ref{thrm.main} ranges beyond delayed-acceptance kernels. Here we
supply sufficient conditions for an \textit{equivalence} between
Metropolis--Hastings proposals.

\begin{theorem}
\label{thrm.sideways}
Let $P_A$, $P_B$, $q_A$, $q_B$, $\alpha_A$ and $\alpha_B$ be as in Definition \ref{defn.two.generic.kernels} except that $q_A(x,y)$ and $q_B(x,y)$ are \textit{uniformly local} proposal kernels, with 
$\log q_A(x,y) - \log q_B(x,y)$ a continuous function from
$\mathbb{R}^{2d}$ to $\mathbb{R}$.  
If, for $\pi$-almost all $x$ and for some function 
$h:[0,\infty)\rightarrow [0,\infty)$ with
    $h(r)<\infty$ for all $r<\infty$,
\begin{equation}
\label{eqn.unifdiscrepMH}
|\Delta(x,y;q_B)-\Delta(x,y;q_A)|\le h(\Norm{y-x}),
\end{equation}
then $P_A$ is variance bounding if and
only if $P_B$ is variance bounding. 
\end{theorem}

Thus, for example, any two random-walk Metropolis algorithms with
Gaussian jumps are equivalent, in that if, on a particular target, one
is variance bounding then so is the other. When restricted to targets
with a continuous gradient this equivalence extends to
truncated MALA algorithms. The continuity requirement on 
$\log q_A-\log q_B$ rules out, for example, an equivalence between a
Gaussian random walk and a random walk where the proposal has bounded
support; indeed, the latter may not even be ergodic if the target has
gaps in its support.


\section{Application to delayed-acceptance kernels}
\label{sect.results}
\subsection{Key definitions and properties}
For uniformly local kernels we will describe two general sets of sufficient
conditions for \eqref{eqn.genabscts} to hold. The first is based upon
the fact that the acceptance probability for $\PDA$ can be written as
\begin{align*}
\alphaDA(x,y)
&=
[1\wedge
  \rDA_1(x,y)]~\left[1\wedge\frac{\rMH(x,y)}{\rDA_1(x,y)}\right]
\ge
[1\wedge
  \rDA_1(x,y)]~\left[1\wedge\frac{1}{\rDA_1(x,y)}\right]~\left[1\wedge\rMH(x,y)\right],\\
\mbox{or}~~~\alphaDA(x,y)
&=
\left[1\wedge\frac{\rMH(x,y)}{\rDA_2(x,y)}\right]~[1\wedge
  \rDA_2(x,y)]
\ge
\left[1\wedge{\rMH(x,y)}\right]
~\left[1\wedge\frac{1}{\rDA_2(x,y)}\right]
~[1\wedge  \rDA_2(x,y)],
\end{align*}
where $\rDA_1$ and $\rDA_2$ are as defined in \eqref{eqn.define.rrr}.
So, if $|\log \rDA_1(x,y)|\le m$ or $|\log \rDA_2(x,y)|\le m$ then $\alphaDA(x,y)\ge
e^{-m}\alphaMH(x,y)$. The quantity 
$|\log \rDA_2(x,y)|=|[\log \pihat(y)-\log \pi(y)]-[\log
  \pihat(x)-\log \pi(x)]|$ 
measures
the discrepancy between the error in the approximation at the proposed
value and the error in the approximation at the current value. We name
this intuitive 
quantity, the \textit{log-error discrepancy}. The quantity $\log \rDA_1$
is less natural since it relates $\pihat(x), \pihat(y)$
and $q(x,y)$.

The second set of conditions is based upon the fact that if either
$\rDA_1(x,y)\le 1$ and $\rDA_2(x,y)\le 1$ or if $\rDA_1(x,y)\ge 1$ and
$\rDA_2(x,y)\ge 1$ then $\alphaDA(x,y)=\alphaMH(x,y)$, whatever the
log-error discrepancy.

These considerations lead to the natural definitions of a `potential problem' set, $\setM_m(x)$, and
a `no problem' set $\setC(x)$, as follows:
\begin{eqnarray}
\label{eqn.setM}
\setM_m(x)&:=&\{y\in\setX: \min|\log \rDA_1(x,y))|,|\log
  \rDA_2(x,y)|>m\},\\
\label{eqn.setC}
\setC(x)&:=&\{y\in\setX:\mbox{sign}[\log \rDA_1(x,y)]~\mbox{sign}[\log
  \rDA_2(x,y)]\ge 0\}.
\end{eqnarray}
Theorem \ref{thrm.main} then leads directly to the following.

\begin{corollary}
\label{cor.gen.da}
Let $\PMH$ be the Metropolis-Hastings kernel given in
\eqref{eqn.PMH} and let $\PDA$ be the corresponding delayed-acceptance
kernel given in \eqref{eqn.PDA}. 
Suppose that for all   $\epsilon>0$ 
there is an $m<\infty$ such that for $\pi$-almost all $x$ there exists a set $\setD(x)\subseteq\mathcal{X}$ such that
\begin{equation}
\label{eqn.nice.inside}
\setM_m(x)\cap\setD(x)\subseteq \setC(x),
\end{equation}
and
\begin{equation}
\label{eqn.small.outside}
\int_{\setD(x)^c}q(x,y)\md y \le \epsilon.
\end{equation}
Subject to these conditions, if $\PMH$ is variance bounding then so
is $\PDA$.
\end{corollary}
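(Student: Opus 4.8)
The plan is to deduce the corollary directly from Theorem~\ref{thrm.main}, applied with $P_A=\PMH$ and $P_B=\PDA$ (so that $\PMH$ plays the role of the kernel assumed variance bounding and $\PDA$ the kernel whose variance bounding is to be established). The decisive simplification is that the delayed-acceptance kernel reuses the parent's proposal, so $q_A=q_B=q$. Consequently the proposal cancels in both hypotheses of the theorem: condition~\eqref{eqn.genlocal} becomes literally the assumed bound~\eqref{eqn.small.outside}, so nothing need be checked there, and condition~\eqref{eqn.genabscts} collapses (on the support of $q(x,\cdot)$, and trivially off it) to the purely pointwise acceptance-ratio inequality $\alphaDA(x,y)\ge\delta\,\alphaMH(x,y)$ for $y\in\setD(x)$. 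Thus the whole argument reduces to producing a single $\delta>0$, depending only on $\epsilon$, for which this holds.

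First I would fix $\epsilon>0$, take the associated $m<\infty$ and the sets $\setD(x)$ furnished by the hypotheses, and claim the bound with $\delta=e^{-m}$. The two facts recorded in the discussion preceding the corollary do all the analytic work: (a) if $|\log\rDA_1(x,y)|\le m$ or $|\log\rDA_2(x,y)|\le m$ then $\alphaDA(x,y)\ge e^{-m}\alphaMH(x,y)$; and (b) on $\setC(x)$ one has $\alphaDA(x,y)=\alphaMH(x,y)$. I would then split over $y\in\setD(x)$. If $y\notin\setM_m(x)$, then by the definition~\eqref{eqn.setM} at least one of $|\log\rDA_1(x,y)|,|\log\rDA_2(x,y)|$ is at most $m$, and fact~(a) gives $\alphaDA(x,y)\ge e^{-m}\alphaMH(x,y)$. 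If instead $y\in\setM_m(x)$, then $y\in\setM_m(x)\cap\setD(x)\subseteq\setC(x)$ by~\eqref{eqn.nice.inside}, so fact~(b) gives $\alphaDA(x,y)=\alphaMH(x,y)\ge e^{-m}\alphaMH(x,y)$. In either case the required inequality holds with $\delta=e^{-m}$ throughout $\setD(x)$, so~\eqref{eqn.genabscts} is verified; since $\PMH$ is variance bounding by assumption, Theorem~\ref{thrm.main} immediately yields that $\PDA$ is variance bounding.

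I expect there to be no serious obstacle here: both analytic facts are supplied by the preamble, and the proof is essentially the observation that the definitions of $\setM_m(x)$ and $\setC(x)$ were engineered precisely so that this two-way case split closes. The only point that would repay a moment's care, were one to reprove fact~(a) from scratch, is that the constant is $e^{-m}$ rather than $e^{-2m}$: bounding the two factors of $\alphaDA$ independently is lossy, and one should instead combine $1\wedge\rDA_2\ge e^{-m}(1\vee\rDA_2)$, valid for $\rDA_2\in[e^{-m},e^{m}]$, with the elementary inequality $(1\wedge c)(1\vee d)\ge 1\wedge(cd)$ applied to $c=\rMH/\rDA_2=\rDA_1$ and $d=\rDA_2$ (and symmetrically when $|\log\rDA_1|\le m$). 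The conceptual content is simply that $\setM_m(x)$ is the only region where $\alphaDA$ can be much smaller than $\alphaMH$, and~\eqref{eqn.nice.inside} confines the $q$-accessible part of that region to the set $\setC(x)$ on which the two acceptance probabilities agree.
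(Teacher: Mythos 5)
Your proof is correct and follows exactly the route the paper intends: the paper states that Corollary~\ref{cor.gen.da} follows ``directly'' from Theorem~\ref{thrm.main}, with the two facts in the preamble (that $\alphaDA\ge e^{-m}\alphaMH$ when $\min(|\log\rDA_1|,|\log\rDA_2|)\le m$, and that $\alphaDA=\alphaMH$ on $\setC(x)$) supplying the bound $\delta=e^{-m}$ in \eqref{eqn.genabscts}, precisely as in your case split. Your closing verification of fact~(a), via $1\wedge\rDA_2\ge e^{-m}(1\vee\rDA_2)$ and $(1\wedge c)(1\vee d)\ge 1\wedge(cd)$, is a correct fleshing-out of a step the paper leaves implicit.
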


When $\pihat$ has heavier tails than $\pi$ then for large $x$, the set
$\setC(x)$ can play an important role in the inheritance of the
variance bounding property. 
In a dimension $d>1$, there are numerous possible definitions
of `heavier tails'. The following is precisely that required for our
purposes:
\begin{definition}
  \label{def.heavy.tail}
(\textbf{heavy tails}) An approximate density $\pihat$ is said to
  have \textit{heavy tails} with respect to a density $\pi$ if
\begin{equation}
\label{eqn.heavy}
\exists~ r_*>0~\mbox{such that if}~\Norm{x}>r_*~\mbox{and}~\Norm{y}>r_*~\mbox{then}~\pihat(x)\le\pihat(y)
\Rightarrow \frac{\pihat(x)}{\pi(x)}\ge\frac{\pihat(y)}{\pi(y)}.
\end{equation}
\end{definition}
Intuitively, the left hand side is true when $x$ is `further from the
centre' 
 (according to $\pihat$) than
$y$, and the implication is that the further out a point, the larger
$\pihat$ is compared with $\pi$.

For uniformly local kernels
 we show (Corollary \ref{cor.growdiscrep}) that it is sufficient that either
the log error discrepancy should satisfy a  growth condition that is
uniform in $||y-x||$, or (Theorem \ref{thrm.heavy})
that the
tails of the approximation should be heavier than those of the target
and that $|\Delta(x,y;q)|$ should satisfy a
growth condition that is uniform in $\Norm{y-x}$. 

For all kernels, boundedness of the error $\pihat(x)/\pi(x)$ away
from $0$ and $\infty$ 
will ensure the required inheritance (Corollary \ref{cor.bounded}). This is a very strong
condition, but we exhibit MHIS and MALA algorithms where the weaker
conditions, that are sufficient for a uniformly local kernel, are satisfied,
but the DA kernel is not variance bounding even though the MH kernel is.

\subsection{DA kernels with the same proposal
  distribution as the parent}
\label{sec.DAwithSame}
Suppose that for all $x\in\setX$, 
$\gamma_{lo}\le \pihat(x)/\pi(x)\le \gamma_{hi}$, then
$|\log \rDA_2(x,y)|\le \log(\gamma_{hi}/\gamma_{lo})$, so applying Corollary \ref{cor.gen.da} with $\setD(x)=\mathcal{X}$ and
$m=\log\gamma_{hi}-\log \gamma_{lo}$ leads to:
\begin{corollary}
\label{cor.bounded}
Let $\PMH$ and $\PDA$ be as described in Corollary 
\ref{cor.gen.da}. If there
exist 
$\gamma_{lo}>0$ and $\gamma_{hi}<\infty$ such that
$\gamma_{lo}\le\pihat(x)/\pi(x)\le \gamma_{hi}$, and if $\PMH$ is variance bounding
then so is $\PDA$.
\end{corollary}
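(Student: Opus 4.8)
Corollary 2 (cor.bounded) is exactly the special case of Corollary 1 that the surrounding text has already set up for us, so the plan is to verify the hypotheses of Corollary 1 directly, using the trivial choices $\setD(x)=\setX$ for every $x$ and a single fixed constant $m$.

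First I would record the elementary consequence of the two-sided bound on the error ratio. Writing the log-error discrepancy explicitly,
\[
|\log \rDA_2(x,y)|
=\left|[\log \pihat(y)-\log \pi(y)]-[\log \pihat(x)-\log \pi(x)]\right|,
\]
and noting that each bracketed term lies in $[\log\gamma_{lo},\log\gamma_{hi}]$ under the hypothesis $\gamma_{lo}\le\pihat/\pi\le\gamma_{hi}$, the difference of two such terms is bounded in absolute value by $\log\gamma_{hi}-\log\gamma_{lo}$. Hence, setting $m:=\log\gamma_{hi}-\log\gamma_{lo}$, we have $|\log \rDA_2(x,y)|\le m$ for all $x$ and $y$, uniformly.

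Next I would translate this uniform bound into the set-containment hypothesis \eqref{eqn.nice.inside} of Corollary 1. Recall from \eqref{eqn.setM} that the potential-problem set $\setM_m(x)$ requires \emph{both} $|\log\rDA_1(x,y)|>m$ \emph{and} $|\log\rDA_2(x,y)|>m$. Since we have just shown $|\log\rDA_2(x,y)|\le m$ everywhere, the set $\setM_m(x)$ is empty for this choice of $m$, and therefore the containment $\setM_m(x)\cap\setD(x)\subseteq\setC(x)$ holds vacuously for every $x$, whatever $\setD(x)$ we take. Taking $\setD(x)=\setX$ then makes \eqref{eqn.small.outside} trivial as well, since $\int_{\setD(x)^c}q(x,y)\,\md y=\int_{\emptyset}q(x,y)\,\md y=0\le\epsilon$ for every $\epsilon>0$.

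Finally I would observe that the single fixed $m$ works \emph{simultaneously} for all $\epsilon>0$: for any prescribed $\epsilon>0$ the same $m=\log\gamma_{hi}-\log\gamma_{lo}$ and the same $\setD(x)=\setX$ discharge both \eqref{eqn.nice.inside} and \eqref{eqn.small.outside}, so the ``for all $\epsilon>0$ there is an $m<\infty$'' quantifier structure of Corollary 1 is satisfied. Invoking Corollary 1 then yields that if $\PMH$ is variance bounding so is $\PDA$, completing the proof. I do not anticipate any genuine obstacle here: the content is entirely in reducing to Corollary 1, and the only point requiring a moment's care is recognising that a uniform two-sided control of $\log\rDA_2$ empties the problem set $\setM_m$ rather than merely shrinking it, so that no restriction on $\setD(x)$ (and hence no locality of the proposal) is needed.
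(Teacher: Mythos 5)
Your proof is correct and is essentially identical to the paper's own argument: the paper likewise bounds $|\log \rDA_2(x,y)|$ by $m=\log\gamma_{hi}-\log\gamma_{lo}$ and applies Corollary \ref{cor.gen.da} with $\setD(x)=\setX$. Your additional observation that this uniform bound makes $\setM_m(x)$ empty (so \eqref{eqn.nice.inside} holds vacuously) is a correct and slightly more explicit rendering of the step the paper leaves implicit.
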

A more direct proof of Corollary \ref{cor.bounded} is possible using Dirichlet forms. However, Corollary \ref{cor.gen.da} comes into its
own when the error discrepancy is unbounded. 

We
first provide a cautionary example which shows that once the errors
are unbounded the delayed-acceptance kernel need not inherit
the variance bounding property from the Metropolis-Hastings kernel even if
the growth of the log error discrepancy is uniformly bounded or if $\pihat$ has heavier tails than $\pi$.

\begin{example}
\label{example.MHIS}
Let $\setX=\mathbb{R}$, let $\PMH$ be an MHIS with
$q(x,y)=q(y)=\pi(y)=e^{-y}\mathds{1}(y>0)$, and let $\PDA$ be the
corresponding delayed-acceptance kernel \eqref{eqn.PDA}, with 
 $\pihat(y)=ke^{-ky}\mathds{1}(y>0)$ with $k>0$ and $k \ne 1$. 
 $\PMH$ is geometrically ergodic,
but $\PDA$ is neither geometrically ergodic nor variance bounding.
\end{example} 

The problem with the algorithm in Example \ref{example.MHIS} is that
for some $x$ values the proposal, $y$, is very likely to be a long way from
$x$ and yet $y\notin\setC(x)$. Our definition of a \textit{uniformly local}
proposal, \eqref{eqn.localA}, provides uniform control on the
probability that $\Norm{y-x}$ is large. Since this is only strictly
necessary for $y\notin \setC(x)$, \eqref{eqn.localA} is stronger
than necessary, but it is much easier to check.

Our first sufficient condition for uniformly local kernels insists on
uniformly bounded growth in the log-error discrepancy except when
$\alphaDA(x,y)=\alphaMH(x,y)$. For $\pi$-almost all $x$ and for some function 
$h:[0,\infty)\rightarrow [0,\infty)$ with
    $h(r)<\infty$ for all $r<\infty$,
\begin{equation}
\label{eqn.unifdiscrep}
\{y\in\setX:|\log \rDA_2(x,y)|>h(\Norm{y-x})\}\subseteq \setC(x).
\end{equation}

If a proposal is uniformly local, given $\epsilon>0$ find $r(\epsilon)$ according to
\eqref{eqn.localA}. Then \eqref{eqn.unifdiscrep} implies that for 
$y\in B(x,r)$, $\setM_{h(r)}\subseteq \setC(x)$. Applying Corollary
\ref{cor.gen.da} with $\setD(x)=B(x,r)$ leads to the following.

\begin{corollary}
\label{cor.growdiscrep}
Let $\PMH$ and $\PDA$ be as described in Corollary
\ref{cor.gen.da}. In addition let $q(x,y)$ be a uniformly local proposal as in  
\eqref{eqn.localA}, and
let the error discrepancy satisfy \eqref{eqn.unifdiscrep}. If
$\PMH$ is variance bounding then so is $\PDA$.
\end{corollary}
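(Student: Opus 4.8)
The plan is to apply Corollary~\ref{cor.gen.da} directly, with the ball $B(x,r)$ playing the role of the set $\setD(x)$ there, so that the whole task reduces to verifying the two hypotheses \eqref{eqn.nice.inside} and \eqref{eqn.small.outside} of that corollary. There is no real obstacle here; the work is organisational: pick the radius $r$ from locality, read off the threshold $m$ from the growth function $h$, and check the set inclusion.

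First I would fix $\epsilon>0$ and use locality. By \eqref{eqn.localA} there is a radius $r=r(\epsilon)<\infty$, the same for every $x$, such that $\int_{B(x,r)^c}q(x,y)\md y<\epsilon$. Taking $\setD(x):=B(x,r)$ then makes \eqref{eqn.small.outside} hold at once, uniformly in $x$.

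Before choosing the threshold it is convenient to assume $h$ is non-decreasing: replacing $h$ by its envelope $\tilde h(s):=\sup_{0\le t\le s}h(t)$ keeps it finite for every finite argument, and since enlarging $h$ only shrinks the set on the left of \eqref{eqn.unifdiscrep}, that inclusion survives. I then set $m:=h(r)<\infty$. The key step is to check \eqref{eqn.nice.inside}: for any $y\in\setM_m(x)\cap B(x,r)$, membership of $\setM_m(x)$ forces $|\log\rDA_2(x,y)|>m=h(r)$, while $y\in B(x,r)$ gives $\Norm{y-x}<r$ and hence $h(\Norm{y-x})\le h(r)=m$. Thus $|\log\rDA_2(x,y)|>h(\Norm{y-x})$, so \eqref{eqn.unifdiscrep} places $y$ in $\setC(x)$, establishing \eqref{eqn.nice.inside}.

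Both hypotheses of Corollary~\ref{cor.gen.da} now hold for every $\epsilon>0$, with the associated finite $m=h(r(\epsilon))$ and set $\setD(x)=B(x,r(\epsilon))$; the corollary then transfers variance bounding from $\PMH$ to $\PDA$. The only point requiring care is the one flagged above, namely that the uniform-in-$\Norm{y-x}$ bound $h$ must be used monotonically, which is why I pass to its envelope at the start. The uniform radius supplied by locality is what lets a single $m$ work across $\pi$-almost all $x$, which is exactly what Corollary~\ref{cor.gen.da} demands.
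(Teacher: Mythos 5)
Your proof is correct and follows essentially the same route as the paper's: the paper likewise applies Corollary~\ref{cor.gen.da} with $\setD(x)=B(x,r(\epsilon))$ given by locality and $m=h(r)$, reading \eqref{eqn.small.outside} off \eqref{eqn.localA} and \eqref{eqn.nice.inside} off \eqref{eqn.unifdiscrep}. Your monotone-envelope step simply makes explicit a monotonicity of $h$ that the paper leaves implicit; the only caveat is that the envelope $\tilde h(s)=\sup_{0\le t\le s}h(t)$ of a pointwise-finite $h$ is finite only when $h$ is locally bounded, a tacit assumption shared by the paper's own argument.
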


Because most of the mass from the proposal, $y$, is
not too far away from the current value, $x$, the discrepancy between the error at
$x$ and the error at $y$ remains manageable provided the discrepancy grows
in a manner that is controlled uniformly across the statespace. Since the random walk Metropolis on an
exponential target density is geometrically ergodic
\cite[]{MengTweed:1996} we may apply Corollary \ref{cor.growdiscrep} with
$h(r)=|k-1|r$, and then Proposition \ref{prop.GEtonoGEbad}, to obtain the following contrast to Example
\ref{example.MHIS},
 and showing that the variance bounding property can be inherited even when the
approximation has \textit{lighter} tails than the target.

\begin{example}
\label{example.RWMa}
Let $\setX=\mathbb{R}$ and let $\PMH$ be a RWM algorithm on $\pi(x)=e^{-x}\mathds{1}(x>0)$ using
$q(x,y)\propto e^{-(y-x)^2/(2\lambda^2)}$. For any $k>0$, let $\PDA$
be the corresponding delayed-acceptance RWM algorithm 
 using a surrogate of
$\pihat(x)=ke^{-kx}\mathds{1}(x>0)$. $\PDA$ is variance bounding and geometrically ergodic.
\end{example}



As yet, the set $\setC(x)$ has not played a part in any of our examples. It is precisely this
set that allows a delayed-acceptance random walk Metropolis
kernel to inherit the variance bounding property 
 from its parent even when the error
discrepancy is not controlled uniformly, provided  $\pihat$
has tails that are heavier than those of $\pi$.
For general MH algorithms an additional control on the behaviour of
$q$ is enough to guarantee
inheritance of the variance bounding property.

\begin{theorem}
\label{thrm.heavy}
Let $\PMH$ be the Metropolis-Hastings kernel given in
\eqref{eqn.PMH} and let $\PDA$ be the corresponding delayed-acceptance
kernel given in \eqref{eqn.PDA}. 
Further, let $q(x,y)$ be a uniformly local proposal in
the sense of 
\eqref{eqn.localA}, let $\pi$ and $\pihat$ be continuous, and
let $\pihat$ have heavier tails than $\pi$ in the sense of
\eqref{eqn.heavy}. Suppose that, in addition, for any $\setD(x)$ required by
\eqref{eqn.nice.inside} and \eqref{eqn.small.outside} there exists a
function $h:[0,\infty)\rightarrow [0,\infty)$ with
    $h(r)<\infty$ for all $r<\infty$, such that for $\pi$-almost all
    $x$
\begin{equation}
\label{eqn.linear.q}
\{y\in\setD(x):~|\Delta(x,y;q)|>
h(\Norm{y-x})\}\subseteq\setC(x).
\end{equation}
Subject to these conditions, if $\PMH$ is variance bounding then so
is $\PDA$.
\end{theorem}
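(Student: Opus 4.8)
The plan is to verify the two hypotheses \eqref{eqn.nice.inside} and \eqref{eqn.small.outside} of Corollary \ref{cor.gen.da}; the conclusion then follows immediately. Fix $\epsilon>0$. Since $q$ is local, \eqref{eqn.localA} supplies a radius $r=r(\epsilon)<\infty$ with $\int_{B(x,r)^c}q(x,y)\md y<\epsilon$ for every $x$, so taking $\setD(x)=B(x,r)$ secures \eqref{eqn.small.outside} at once. For this choice of $\setD$, the hypothesis \eqref{eqn.linear.q} furnishes a function $h$ (which, replacing it by $s\mapsto\sup_{0\le t\le s}h(t)$, I may take nondecreasing, as it is in all our examples) such that on $\setD(x)$ any $y$ with $|\Delta(x,y;q)|>h(\Norm{y-x})$ already lies in $\setC(x)$; writing $H:=h(r)<\infty$, the only points of $\setD(x)$ not yet known to belong to $\setC(x)$ satisfy $|\Delta(x,y;q)|\le H$. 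It then remains only to choose a single threshold $m$ so that $\setM_m(x)\cap\setD(x)\subseteq\setC(x)$ for $\pi$-almost all $x$.

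It is convenient to write $a:=\log\pihat(y)-\log\pihat(x)$ and $b:=\log\pi(y)-\log\pi(x)$, so that $\log\rDA_1(x,y)=a+\Delta(x,y;q)$ and $\log\rDA_2(x,y)=b-a$. I would split on the size of $\Norm{x}$. In the bounded regime $\Norm{x}\le r_*+r$, every $y\in\setD(x)$ satisfies $\Norm{y}\le r_*+2r$, so $(x,y)$ ranges over a fixed compact set; continuity (and positivity on their supports) of $\pi$ and $\pihat$ then bound $|\log\rDA_2(x,y)|=|b-a|$ by some finite $M$ independent of $x$. Hence for $m>M$ no $y\in\setD(x)$ can satisfy $|\log\rDA_2(x,y)|>m$, and so $\setM_m(x)\cap\setD(x)=\emptyset\subseteq\setC(x)$ trivially. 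This is the device that lets one threshold $m$ serve both regimes.

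In the far regime $\Norm{x}>r_*+r$, every $y\in\setD(x)=B(x,r)$ has $\Norm{y}>r_*$, so both points exceed $r_*$ and the heavy-tails condition \eqref{eqn.heavy} applies. Take $y\in\setM_m(x)\cap\setD(x)$. If $|\Delta(x,y;q)|>h(\Norm{y-x})$ then $y\in\setC(x)$ by \eqref{eqn.linear.q}; otherwise $|\Delta(x,y;q)|\le H$, and membership in $\setM_m(x)$ forces $|a+\Delta(x,y;q)|>m$, whence $|a|>m-H$. Choosing $m>2H$ gives $|a|>H\ge|\Delta(x,y;q)|$, so $\mathrm{sign}(\log\rDA_1(x,y))=\mathrm{sign}(a)$. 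Rewriting \eqref{eqn.heavy} in logarithms shows that, for points beyond $r_*$, $a$ and $b$ carry the same sign with $|b|\ge|a|$, so $b-a$ has the sign of $a$ (the possibility $b=a$ being excluded on $\setM_m(x)$, where $|b-a|>m>0$); hence $\log\rDA_1(x,y)$ and $\log\rDA_2(x,y)$ share a sign and $y\in\setC(x)$.

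Taking $m>\max\{M,2H\}$ then yields $\setM_m(x)\cap\setD(x)\subseteq\setC(x)$ for $\pi$-almost all $x$, which, with \eqref{eqn.small.outside}, completes the verification; Corollary \ref{cor.gen.da} then shows $\PDA$ is variance bounding. I expect the main obstacle to be the bookkeeping at the boundary $\Norm{x}\approx r_*$: the heavy-tails inequality constrains the signs of $\log\rDA_1$ and $\log\rDA_2$ only when \emph{both} $x$ and $y$ are large, so the bounded regime must be dispatched separately, and the argument hinges on the observation that a sufficiently large $m$ empties $\setM_m(x)\cap\setD(x)$ there. Care is also needed to keep the radius $r$, the bound $H$, and the compact-set bound $M$ uniform in $x$ so that one $m$ works simultaneously; the mild positivity of the continuous densities on compacts, used to bound $\log\rDA_2$, is the only extra regularity invoked.
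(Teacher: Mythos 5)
Your proof is correct and follows essentially the same route as the paper's: the same choice $\setD(x)=B(x,r)$, the same split into a compact near region (where continuity and positivity bound $|\log \rDA_2|$) and a far region $\Norm{x}>r_*+r$ (where the heavy-tails condition together with $|\Delta(x,y;q)|\le h(r)$ forces $\log\rDA_1$ and $\log\rDA_2$ to share a sign), all feeding into Corollary \ref{cor.gen.da}. The only differences are cosmetic: you argue forward on $\setM_m(x)\cap\setD(x)$ with threshold $m>2h(r)$ where the paper argues the contrapositive (that $y\in\setD(x)\setminus\setC(x)$ forces $|\log\rDA_1|\le h(r)$), and you make explicit the monotonization of $h$ that the paper leaves implicit.
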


We now consider the delayed-acceptance versions of the random walk Metropolis, 
 the truncated MALA, and the MALA. Before doing this we
provide the details of a property that was anticipated in
\cite{RobTweed1996MALA}.

\begin{proposition}
\label{prop.TMALA.GE}
Let $\PMH_{\rm RWM}$ be a random walk Metropolis kernel using $q(x,y)\propto
e^{-\frac{1}{2\lambda^2}\Norm{y-x}^2}$ and 
targeting a density $\pi(x)$. Let $\PMH$ be a
Metropolis-Hastings kernel on $\pi$ of the form
$q(x,y)\propto e^{-\frac{1}{2}\lambda^2\Norm{y-x-v(x)}^2}$,
where $\mbox{$\pi$-ess sup}_x \Norm{v(x)}<\infty$.  $\PMH_{\rm RWM}$ is
variance bounding if and only if $\PMH$ is variance bounding.
\end{proposition}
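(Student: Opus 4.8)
The plan is to deduce the equivalence by applying Theorem \ref{thrm.main} twice, once in each direction, to the pair of proposals $q_A=q_{RWM}$ and $q_B=q$. This is in the spirit of Theorem \ref{thrm.sideways}, since $\Delta(x,y;q_{RWM})=0$ (the random-walk proposal is symmetric) and, as verified below, $|\Delta(x,y;q)|$ is dominated by a finite function of $\Norm{y-x}$, so that \eqref{eqn.unifdiscrepMH} holds. However, the proposition only assumes $V:=\text{ess sup}\,\Norm{v}<\infty$ and does not require $v$ to be continuous, so I would go directly through Theorem \ref{thrm.main} rather than invoke Theorem \ref{thrm.sideways}: this both dispenses with the continuity hypothesis on $\log q_A-\log q_B$ and exploits that the conditions \eqref{eqn.genlocal}--\eqref{eqn.genabscts} need only hold for $\pi$-almost all $x$, which is exactly the regime on which $\Norm{v(x)}\le V$.

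For the direction ``$\PMH_{RWM}$ variance bounding $\Rightarrow\PMH$ variance bounding'', take $P_A=\PMH_{RWM}$, $P_B=\PMH$, and set $\setD(x)=B(x,r)$. Since $q_{RWM}$ is a fixed-scale Gaussian it is local, so given $\epsilon>0$ we may choose $r$ so that $\int_{B(x,r)^c}q_{RWM}(x,y)\,\md y\le\epsilon$ for all $x$, giving \eqref{eqn.genlocal}. It remains to establish the domination \eqref{eqn.genabscts} on $B(x,r)$. Expanding the Gaussian exponents, $\log q(x,y)-\log q_{RWM}(x,y)$ is an affine-plus-quadratic function of $y-x$ whose coefficients involve only $v(x)$; using $\Norm{v(x)}\le V$ for $\pi$-almost all $x$, this difference is bounded in absolute value by a finite function of $\Norm{y-x}$, hence by a constant on $B(x,r)$, so that $q(x,y)\ge\delta_1 q_{RWM}(x,y)$ there for some $\delta_1>0$ independent of $x$. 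For the acceptance probabilities, $\alpha_{RWM}(x,y)=1\wedge\pi(y)/\pi(x)$ and $\alpha(x,y)=1\wedge[\pi(y)/\pi(x)]e^{\Delta(x,y;q)}$; by the elementary inequality used in Section \ref{sect.results} (if $|\log(q(y,x)/q(x,y))|\le m$ then $\alpha(x,y)\ge e^{-m}\alpha_{RWM}(x,y)$), together with the bound $|\Delta(x,y;q)|\le h(\Norm{y-x})$ established below, we get $\alpha(x,y)\ge\delta_2\,\alpha_{RWM}(x,y)$ on $B(x,r)$ with $\delta_2=e^{-h(r)}>0$. Multiplying gives \eqref{eqn.genabscts} with $\delta=\delta_1\delta_2$, and Theorem \ref{thrm.main} yields the conclusion.

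The reverse direction is symmetric: take $P_A=\PMH$ and $P_B=\PMH_{RWM}$. Here I first note that $q$ is itself local for $\pi$-almost all $x$, because the proposal is a mean-zero Gaussian displaced by $v(x)$ and $\Norm{v(x)}\le V$ bounds the displacement uniformly on the full-measure set $\{\Norm{v(x)}\le V\}$; this gives \eqref{eqn.genlocal} for $q_A=q$. The two bounds from the previous paragraph reverse cleanly: on $B(x,r)$ we have $q_{RWM}(x,y)\ge\delta_1' q(x,y)$, and applying the same $1\wedge$ inequality with $-\Delta(x,y;q)$ in place of $\Delta(x,y;q)$ gives $\alpha_{RWM}(x,y)\ge e^{-|\Delta(x,y;q)|}\alpha(x,y)\ge\delta_2'\alpha(x,y)$. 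Hence \eqref{eqn.genabscts} holds with $q_A=q$, $q_B=q_{RWM}$, and Theorem \ref{thrm.main} gives $\PMH$ variance bounding $\Rightarrow\PMH_{RWM}$ variance bounding. Combining the two implications proves the equivalence.

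The only genuine computation is the bound $|\Delta(x,y;q)|\le h(\Norm{y-x})$: writing $u=y-x$ and expanding $\Norm{u+v(y)}^2-\Norm{u-v(x)}^2$, the symmetric $\Norm{u}^2$ terms cancel and one is left with $2u\cdot(v(x)+v(y))$ plus a difference of squared norms, which $\Norm{v(\cdot)}\le V$ bounds by a quantity linear in $\Norm{y-x}$. The main conceptual point --- and the reason the local structure is indispensable --- is that both $\log(q/q_{RWM})$ and $\Delta(x,y;q)$ grow with $\Norm{y-x}$ and are therefore \emph{not} bounded globally; the domination \eqref{eqn.genabscts} can only be secured on a bounded region $\setD(x)=B(x,r)$, and it is precisely locality of the proposals, feeding \eqref{eqn.genlocal}, that makes Theorem \ref{thrm.main} applicable with such a $\setD(x)$. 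I would expect the minor bookkeeping around ``for all $x$'' versus ``$\pi$-almost all $x$'' --- needed when deducing locality of the drifted proposal $q$ from an essential-supremum bound on $\Norm{v}$ --- to be the only place requiring care.
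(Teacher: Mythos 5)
Your proof is correct and follows essentially the same route as the paper's: both apply Theorem \ref{thrm.main} in each direction with $\setD(x)=B(x,r)$, deduce locality of the drifted proposal from the essential bound on $\Norm{v}$, and lower-bound $q\alpha$ against $q_{RWM}\alpha_{RWM}$ on $B(x,r)$ using the boundedness of $\log q - \log q_{RWM}$ and of $\Delta(x,y;q)$ there. The only (immaterial) difference is bookkeeping: the paper bounds $q(x,y)\alpha(x,y)=q(x,y)\wedge[q(y,x)\pi(y)/\pi(x)]$ in one step with a single constant $e^{-m(r)}$, whereas you factor the bound into a proposal-ratio part $\delta_1$ and an acceptance-ratio part $\delta_2$ and take $\delta=\delta_1\delta_2$.
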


 Proposition \ref{prop.TMALA.GE} clearly applies to 
a truncated MALA kernel on $\pi(x)$ using
$q$ as in \eqref{eqn.TMALA.prop}. It, together with each of our subsequent results for the
 truncated MALA, 
also applies to a MALA kernel on a target where 
$\mbox{$\pi$-ess sup}_x \Norm{\nabla   \log \pi(x)}= D<\infty$; in practice, however, the useful
set of such kernels is limited to targets with exponentially decaying
tails, since MALA is not geometrically ergodic on targets with heavier
tails \cite[]{RobTweed1996MALA}.

Given Proposition \ref{prop.TMALA.conditions} and its prelude,  a direct application of Theorem \ref{thrm.heavy} then leads to the following.

\begin{example}
\label{example.RWMTMALA}
Let $\PMH_{\rm RWM}$ and $\PMH_{\rm TMALA}$ be, respectively, a random walk
Metropolis kernel and a truncated MALA kernel
 on the differentiable density, $\pi(x)$. Let $\PDA_{\rm RWM}$ and
 $\PDA_{\rm TMALA}$ be the corresponding 
delayed-acceptance kernels, created as in
\eqref{eqn.PDA} through the
continuous density, $\pihat(x)$. Suppose also that $\pihat$ has heavier tails than
$\pi$ in the sense of \eqref{eqn.heavy}. Subject to these conditions,
if $\PMH_{\rm RWM}$ is variance bounding then so is $\PDA_{\rm RWM}$, and
if $\PMH_{\rm TMALA}$ is variance bounding then so is 
$\PDA_{\rm TMALA}$.
\end{example}

The MALA is geometrically ergodic when applied to
one-dimensional targets of the form 
$\pi(x)\propto e^{-|{x}|^\beta}$ for $\beta \in [1,2)$
  \cite[]{RobTweed1996MALA}; when $\beta=2$ geometric ergodicity
  occurs provided $\lambda$ is sufficiently small, and for $\beta>2$
  the MALA is not geometrically ergodic. 
Even when $\beta >1$, however,
  Theorem \ref{thrm.heavy} does not apply because the proposal
  is not uniformly local.  

\begin{example}
\label{example.MALAa}
Let $\setX=\mathbb{R}$ and let $\PMH$ be a MALA algorithm on $\pi(x)\propto e^{-x^\beta}1(x>0)$
with $1\le \beta<2$. Let
$\pihat(x)\propto e^{-x^\gamma}1(x>0)$
 and let $\PDA$ be the corresponding delayed-acceptance MALA kernel
\eqref{eqn.PDA} (i.e. using  
a proposal of $Y=x+\frac{1}{2}\lambda^2\nabla \log \pi(x)+\lambda Z$,
where $Z\sim N(0,1)$).  $\PDA$ is
neither geometrically ergodic nor variance bounding, except when 
 $\gamma=\beta$.
\end{example}

The contrast between the truncated MALA and the MALA in Examples
\eqref{example.RWMTMALA} and \eqref{example.MALAa} highlights the
importance of a uniformly local proposal. In practice, however, if $\pi(x)$ is computationally expensive to evaluate then,
typically, $\nabla \log \pi(x)$ will also be expensive to evaluate and
it might seem more reasonable to base the proposal for
delayed-acceptance 
MALA and
delayed-acceptance truncated MALA on 
$\nabla \log \pihat(x)$.

\subsection{Kernels where the proposal is based upon $\pihat$}
\label{sect.resultsB} 
On some occasions, the proposal $q(x,y)$ is a function of the
posterior, $\pi(x)$, and on such occasions it may be expedient for the
delayed-acceptance algorithm to use a proposal
$\qhat(x,y)$, which is based upon $\pihat(x)$. The acceptance rate is  
 $\alphaDA_b(x,y)=[1\wedge \rDA_{1b}(x,y)][1\wedge \rDA_2(x,y)]$, where
\[
\rDA_{1b}(x,y):=\frac{\pihat(y)\qhat(y,x)}{\pihat(x)\qhat(x,y)}.
\]
With $\alphabarDA_{b}(x):=\Expects{q}{\alphaDA_{b}(x,Y)}$, the
corresponding delayed acceptance kernel is
\begin{equation}
\label{eqn.PDAb}
\PDA_{b}(x,dy):=\qhat(x,y)\md y ~\alphaDA_{b}(x,y)+[1-\alphabarDA_{b}(x)]\delta_{x}(dy).
\end{equation}

Let $\rMH_{\rm hyp}(x,y):=\pi(y)\qhat(y,x)/[\pi(x)\qhat(x,y)]$, 
$\alphaMH_{\rm hyp}(x,y)=1\wedge \rMH_{\rm hyp}(x,y)$, and, 
with $\alphabarMH_{\rm hyp}(x)=\Expects{\qhat}{\alphaMH_{\rm hyp}(x,Y)}$, 
consider the hypothetical Metropolis-Hastings kernel:
\begin{equation}
\label{eqn.Phyp}
\PMH_{\rm hyp}(x,\md y):=\qhat(x,y)\md y~ \alphaMH_{\rm hyp}(x,y)+[1-\alphabarMH_{\rm hyp}(x))]\delta_{x}(dy).
\end{equation}
Now,
 $\alphaDA_b(x,y)\le \alphaMH_{\rm hyp}(x,y)$, so  if $\PMH_{\rm hyp}$ is not
variance bounding then $\PDA_b$ is not variance bounding
either. There is an exact correspondence between $\PMH$ from the
previous section, and $\PMH_{\rm hyp}$, and it is natural to consider inheritance of geometric
ergodicity from $\PMH_{\rm hyp}$ exactly as in the prevous section we
considered inheritance from $\PMH$. The theoretical results are
analogous and will not be restated; moreover, the theoretical
properties of kernels of the form $\PMH_{\rm hyp}$ are less well
investigated. Instead we illustrate inheritance of variance
bounding (or its lack) through two examples. 

\begin{example}
\label{ex.TMALAb}
Let $\PMH_{\rm TMALA}$ be, a truncated MALA kernel
 on the differentiable density, $\pi(x)$. Let 
 $\PDA_{\rm TMALAb}$ be the corresponding 
delayed-acceptance kernel, created as in
\eqref{eqn.PDAb} through the
differentiable density $\pihat(x)$. 
$\PDA_{\rm TMALAb}$ inherits the variance bounding property 
from $\PMH_{\rm TMALA}$ if either of the following conditions
holds.
(i) There is uniformly bounded growth in the log error discrepancy, in
the sense of \eqref{eqn.unifdiscrep}, or 
(ii) $\pihat$ has heavier tails than
$\pi$ in the sense of \eqref{eqn.heavy}.
\end{example}

Our penultimate example 
 suggests that a delayed-acceptance MALA based
upon an approximation that has heavier (though not too much heavier)
 tails is a reasonable choice. 

\begin{example}
\label{ex.MALAb}
Let $\setX=\mathbb{R}$ and let $\PMH$ be a MALA algorithm on $\pi(x)\propto e^{-x^\beta}\mathds{1}(x>0)$
with $1\le \beta<2$. Let
$\pihat(x)\propto e^{-x^\gamma}\mathds{1}(x>0)$
 and let $\PDA$ be the corresponding delayed-acceptance MALA kernel
created as in
\eqref{eqn.PDAb} through the
differentiable density $\pihat(x)$.
$\PDA$ is variance bounding $\iff$ $\PDA$ is geometrically ergodic $\iff 1\le \gamma\le \beta$.
\end{example}

We summarise the consequences of Examples \ref{example.RWMTMALA} to \ref{ex.MALAb} for $\pi(x)\propto e^{-x^\beta}\mathds{1}(x>0)$ and $\pihat(x)\propto e^{-x^\gamma}\mathds{1}(x>0)$ in Table \ref{table.summary}, filling in the two blanks with Example \ref{ex.TMALA.not.GE} below. The table displays the results in terms of variance bounding, which is equivalent to geometric ergodicity in all these cases by Proposition \ref{prop.GEtonoGEbad}.

\begin{example}
  \label{ex.TMALA.not.GE}
Let $\setX=\mathbb{R}$ and let $\PMH$ be a RWM or truncated MALA algorithm on $\pi(x)\propto e^{-x^\beta}\mathds{1}(x>0)$
with $1\le \beta<2$. Let
$\pihat(x)\propto e^{-x^\gamma}\mathds{1}(x>0)$
 and let $\PDA$ be the corresponding delayed-acceptance RWM or truncated MALA kernel
created either from 
\eqref{eqn.PDA} or \eqref{eqn.PDAb} through the
differentiable density $\pihat(x)$. If $1<\beta<\gamma$, 
$\PDA$ is neither geometrically ergodic nor variance bounding.
\end{example}

\begin{table}
\begin{tabular}{l|cccc}
  Algorithm & MALA & RWM/TMALA & MALA ($\nabla \log \pihat$) & TMALA ($\nabla\log \pihat$)\\
  \hline
  $1\le \gamma<\beta\le 2$&$\times$ Ex \ref{example.MALAa}&\checkmark $ $ Ex \ref{example.RWMTMALA}&\checkmark $ $ Ex \ref{ex.MALAb} &\checkmark $ $ Ex \ref{ex.TMALAb}\\
  $1\le \beta<\gamma$&$\times$ Ex \ref{example.MALAa}&$\times$~Ex \ref{ex.TMALA.not.GE}&$\times$ Ex \ref{ex.MALAb} & $\times$~Ex \ref{ex.TMALA.not.GE}
\end{tabular}
\caption{Whether or not the DA algorithm for $\pi(x)\propto e^{-x^\beta}\mathds{1}(x>0)$ using $\pihat(x)\propto e^{-x^\gamma}\mathds{1}(x>0)$ is variance bounding as a function of $\gamma$ and $\beta$ and the specific DA algorithm. The final two columns indicate that $\pihat$ rather than $\pi$ is used to create the proposal. \label{table.summary}  }
\end{table}

\section{Numerical demonstrations}
\label{sec.numerical}
The theoretical results from Section \ref{sect.results} were made more concrete through Examples \ref{example.BGLR} to \ref{ex.TMALA.not.GE}. In this section we investigate the numerical performance of delayed acceptance algorithms in examples similar to those used in earlier sections. The specific targets in the earlier Examples were chosen to demonstrate particular points as simply as possible; here we deliberately investigate a broader class of targets, the exponential family class \cite[e.g.][]{RobTweed1996MALA,Livingstoneetal2019}:
\begin{equation}
  \label{eqn.WOSSNAME}
  \pi(x)\propto \exp\left(-||x||^\beta\right)
  ~~~\mbox{and}~~~
  \pihat(x)\propto \exp\left(-||x||^\gamma/\kappa^\gamma\right).
\end{equation}
The parameters $\beta$ and $\gamma$ in (19) govern the lightness of the tails in the target and the approximation to it respectively, and allow us to vary these separately.

A lack of variance bounding can be seen in terms of the chain struggling to leave a certain region, which typically has a low probability under $\pi$.
In practice, this lack of variance bounding (or a lack of geometric ergodicity) can manifest in two ways.
\begin{enumerate}
\item When a sensible starting value is not known, a starting value with poor properties may be chosen unwittingly and the algorithm may struggle to move from this initial point or region of the space.
  \item Even when started from a reasonable value, over the course of a sufficiently long run the algorithm will visit this ``danger region'' and then struggle to leave.
\end{enumerate}
For the target \eqref{eqn.WOSSNAME}, the ``danger region'' corresponds to the tails of $\pi$.

Our experiments deliberately start the algorithm in the tails of $\pi$ and then measure the number of iterations to reach the centre of the distribution. To make ``reaching the centre'' concrete, we find the number of iterations until $||x||$ is less than its median value under $\pi$. To decide where in the tails we start, we set $||x||$ to its $1-p_0$ quantile under $\pi$, for $p_0\in\{10^{-1},10^{-2},\dots,10^{-6}\}$ in Scenarios (i) and (ii), and $p_0\in\{10^{-4},10^{-8},\dots,10^{-24}\}$ in Scenarios (iii) and (iv); we start the algorithm from a uniformly random point on the surface of that hypersphere. In practical MCMC, many runs are of $\mathcal{O}(10^6)$ iterations, so it is not unreasonable that issues which are detected for $p_0\ge 10^{-8}$ might occur in practice even when the algorithm is started from a sensible value. We work in dimension $d=5$ and repeat each experiment $20$ times, except for scenario (iii) where we repeat $10$ times to avoid excessive clutter.

We consider four specific scenarios, and so as to bound the amount of computing time, in each scenario we set a maximum number of iterations for which the algorithm should be run. In all scenarios the time until convergence increases with the starting quantile, whether or not the algorithm is variance bounding, for the most part simply because the algorithm is starting further from the main mass of the target. However, when the disparity between algorithms grows towards an order of magnitude, this suggests danger.

For the DARWM and DAMALA, the scaling parameter, $\lambda$, was chosen so that for the RWM or MALA itself, the acceptance rate was a little larger than the theoretical optimum values of approximately $23\%$ and $57\%$ respectively. DATMALA used the same scaling as DAMALA and a truncation value such that when TMALA explored the true posterior, fewer than $4\%$ of the gradients were truncated.

\textbf{Scenario i} ($\beta=\gamma=2$, $\kappa=1/2$ and $\kappa=2$). The results appear in the top-left of Figure \ref{fig.experiments} and demonstrate the undesirable behaviour when the target and the approximation are both Gaussian but the approximation has lighter tails than the target (see Example \ref{example.BGLR}), and the reasonable behaviour when the approximation's tails are less tight than the target's (Example \ref{example.BGLR.counter}).

\textbf{Scenario ii} ($\beta=\gamma=1$, $\kappa=1/2$ and $\kappa=2$). The results, in the top-right of Figure \ref{fig.experiments}, demonstrate that, in alignment with Example \ref{example.MHIS}, the worst behaviour by some margin is exhibited by the only non-variance bounding algorithm: the independence sampler where $\pihat$ uses a smaller scaling than $\pi$ has. In particular, aligning with Example \ref{example.RWMa}, the DARWM that uses the same $\pihat$ as the poor independence sampler performs only marginally worse than the DARWM which uses the notionally `safer` $\pihat$.

\textbf{Scenario iii} ($\beta=1.5$, $\kappa=1$, $\gamma=1.2$ and $\gamma=1.8$). This corresponds to Examples \ref{example.RWMTMALA}, \ref{example.MALAa} and \ref{ex.TMALA.not.GE} and is consistent with the DARWM and DATMALA, but not DAMALA, being variance bounding when $1<\gamma<\beta$, and none being variance bounding when $\gamma>\beta$.

\textbf{Scenario iv} ($\beta=1.5$, $\kappa=1$, $\gamma=1.2$ and $\gamma=1.8$, proposal uses $\nabla \log \pihat$) and suggests that as with Examples \ref{ex.TMALAb} and  \ref{ex.MALAb} , DAMALA and DATMALA are both variance bounding when $\gamma<\beta$, and following Example \ref{ex.TMALA.not.GE}, neither is variance bounding when $\gamma >\beta$.

In scenarios (i), (iii) and (iv) the target itself has lighter-than exponential  tails, so even though the x-axis is linear in $\log p$ it is sublinear in the magnitude of the initial value, $||x_0||$. Hence, issues with the algorithms might be expected to appear more slowly as $-\log p_0$ increases than they do with scenario (ii). Whilst exceptionally poor behaviour is unlikely to be seen, therefore, during a typical run that has been started from the main posterior mass, it could easily occur as a result of a poor starting value.

\begin{figure}
\begin{center}
    \includegraphics[scale=0.45]{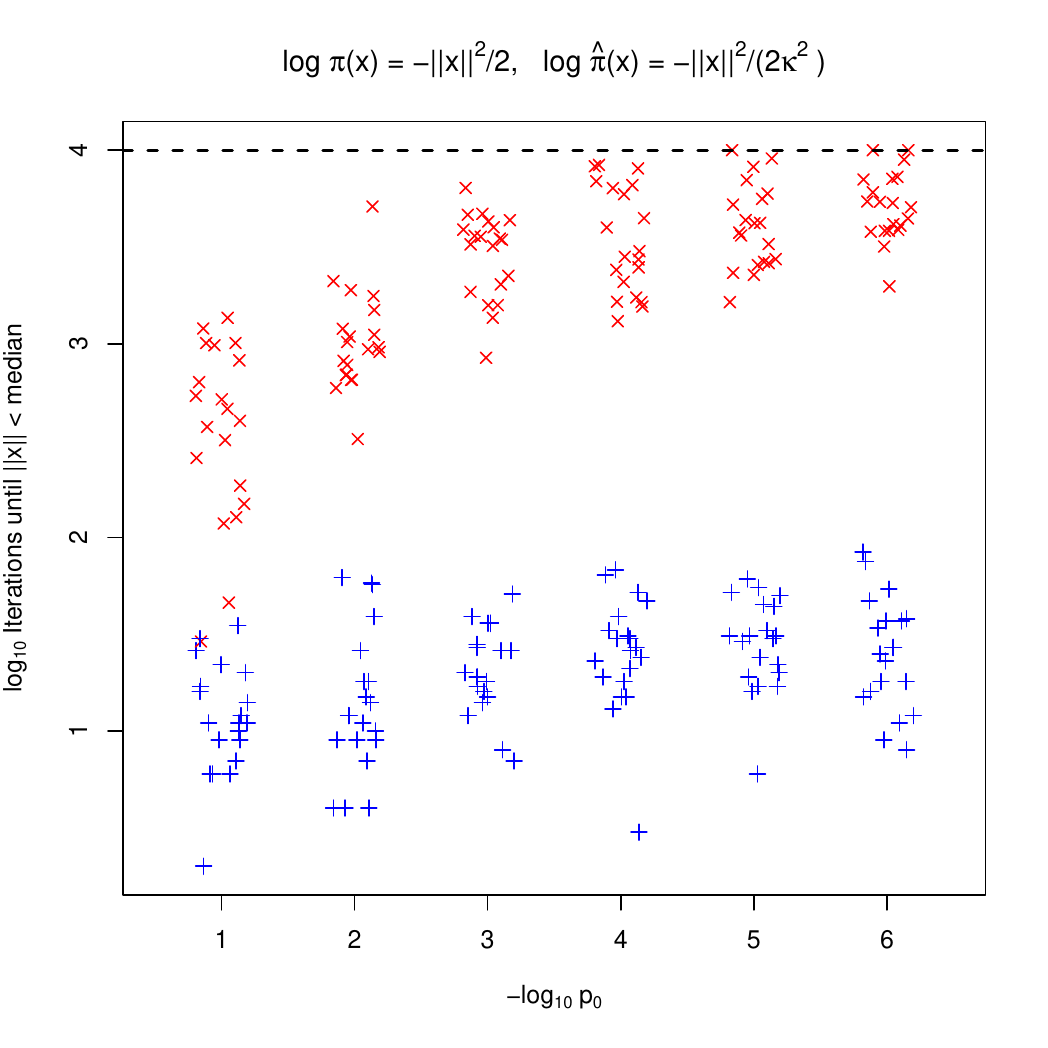}
    \includegraphics[scale=0.45]{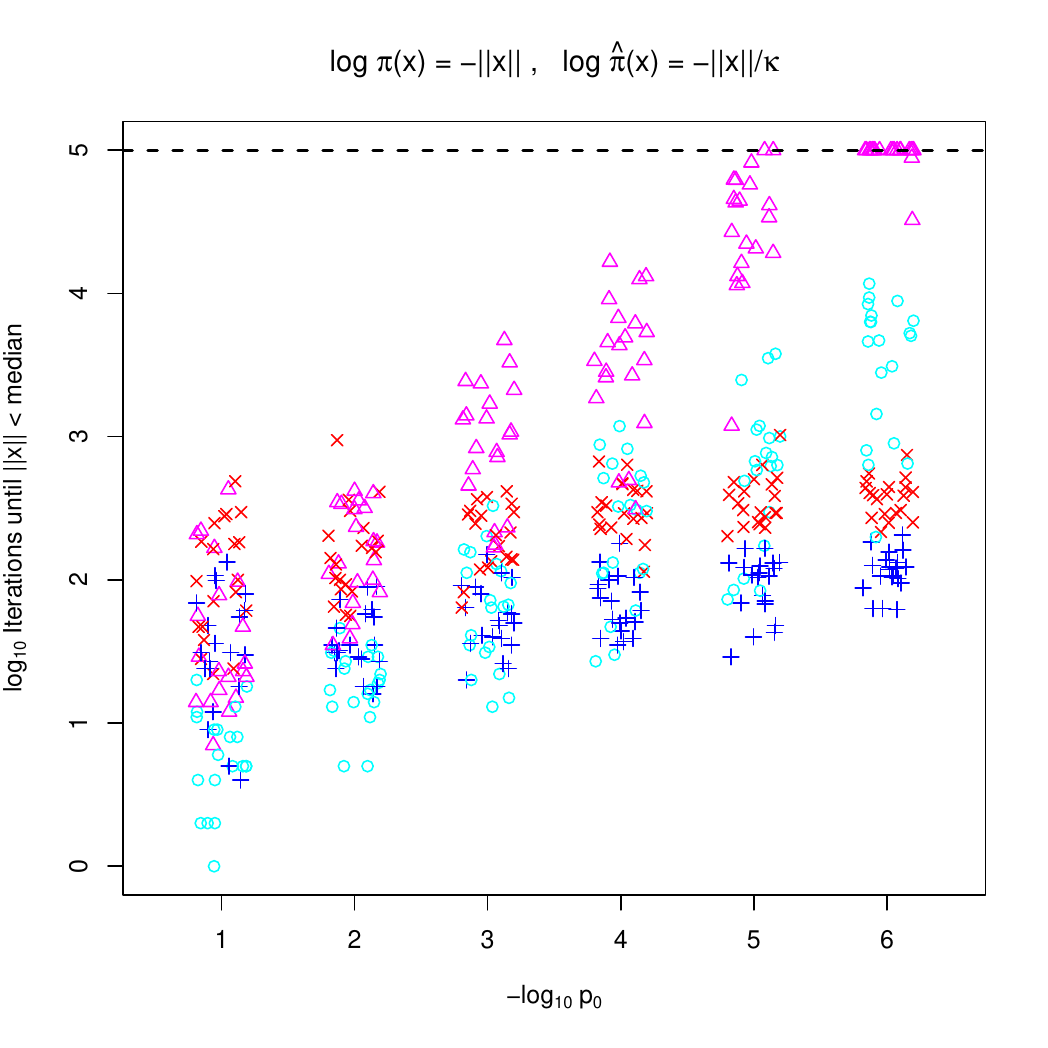}
    \includegraphics[scale=0.45]{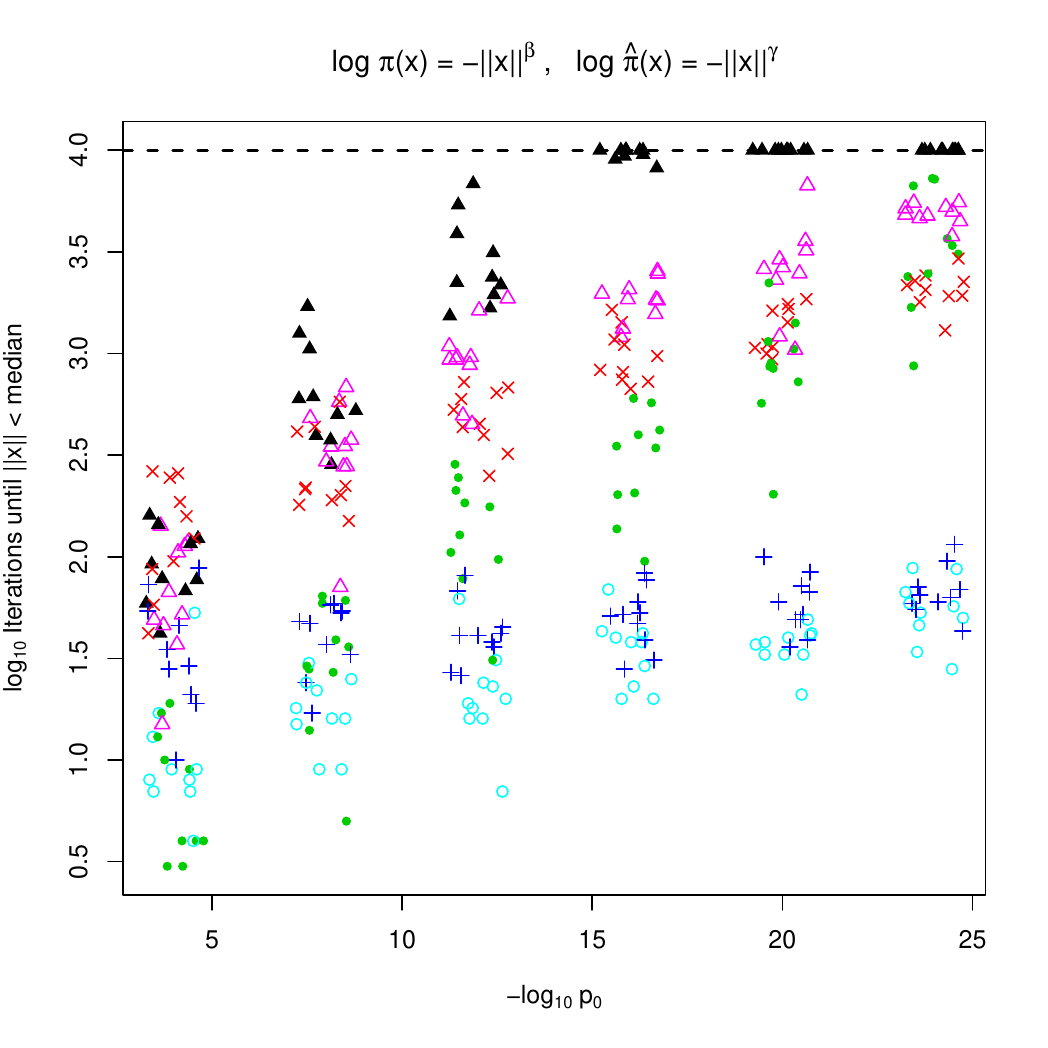}
    \includegraphics[scale=0.45]{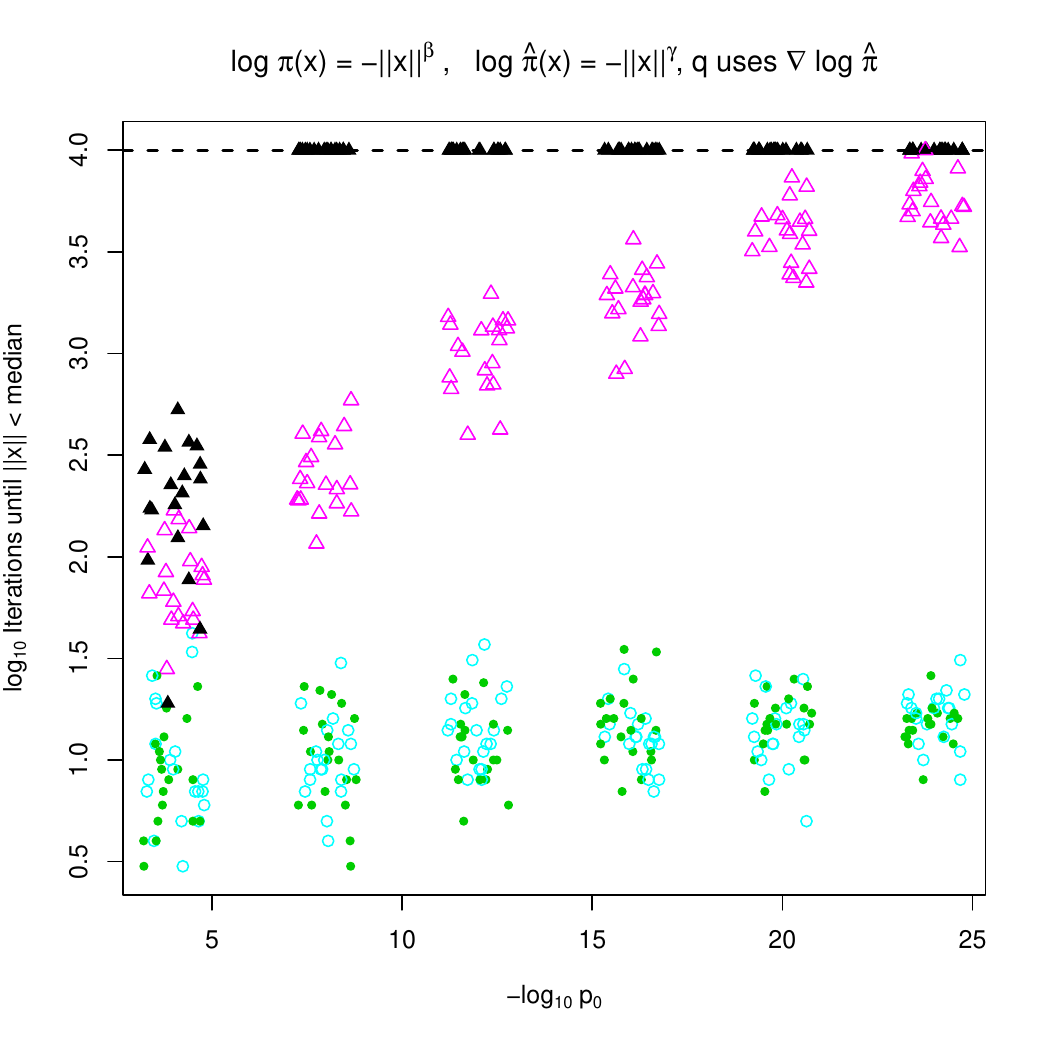}
    \end{center}
\caption{Plots of $\log_{10}$ convergence time against (jittered) $-\log_{10} p_0$ for scenarios (i) top left, (ii) top right, (iii) bottom left and (iv) bottom right. In plots (i)-(iii), a dark blue $+$ corresponds to DARWM where the target has a `good' parameter value (scaling in (i) and (ii), and power in (iii)) and a red $\times$ to DARWM with a `poor' parameter value. In plot (ii) a light blue $\circ$ corresponds to DAIS (DA independence sampler) with $\kappa=2$ and a magenta $\triangle$ to DAIS with $\kappa=1/2$. In plots (iii) and (iv), the light blue $\circ$ corresponds to truncated DATMALA with $\gamma<\beta$, and magenta $\triangle$ to DATMALA with $\gamma>\beta$, whilst a green $\bullet$ corresponds to DAMALA with $\gamma<\beta$, and black $\blacktriangle$ to DAMALA with $\gamma>\beta$.
\label{fig.experiments}}
\end{figure}

\section{Discussion}

Delayed acceptance Metropolis-Hastings algorithms are popular when the posterior is computationally intensive to evaluate yet a cheap approximation is available. Approximations can arise through many mechanisms, including the coarsening of a numerical-integration grid, subsampling from big data, Gaussian process approximation and nearest neighbour averaging. To date, with the exception of \cite{FranksVihola2020} and a note in \cite{BGLR:2015}, little consideration has been given to the properties of the resulting algorithm and, in particular as to whether the delayed-acceptance algorithm might inherit good properties, such as variance bounding, from its parent Metropolis-Hastings algorithm. From the MCMC output, one might reasonably hope to be able to estimate any quantity with a finite variance under $\pi$ and be confident that the Monte Carlo error would reduce in inverse proportion to the square-root of the run length; however, if the algorithm is not variance bounding then this may not be the case.
 
We have investigated the inheritance of the variance bounding property and provided sufficient conditions for it to occur. A general rule of thumb for algorithms with \emph{uniformly local} (see Definition \ref{def.local}) proposals, such as the random walk Metropolis and the truncated MALA, is that the approximation should have \emph{heavier tails} (see Definition \ref{def.heavy.tail}) than the target; however, this is not always necessary (see Example \ref{example.RWMa}). The MALA algorithm does not enjoy the same good properties as the truncated MALA and, in particular, does not necessarily inherit variance bounding even when the approximation does have heavier tails than the target (see Example \ref{example.MALAa}).

A note of caution is also in order: variance bounding (and/or geometric ergodicity) are helpful properties as, in particular, they guarantee the existence of a usual central limit theorem for ergodic averages. However, whilst non-zero, the conductance of a kernel could be exceedingly small (or the geometric rate of convergence execptionally close to one) so that the algorithm might not be useful in practice. Thus, whilst we recommend following the advice in this article when choosing the approximation so as to reduce the chance of false confidence in the resulting Monte Carlo estimates, one should also continue to check other diagnostics, such as trace plots, and to vary any tuning parameters to optimise performance.

\textit{Acknowledgements:} {This paper was motivated by initial conversations with Alexandre
  Thiery about the preservation, and lack of preservation, of
  geometric ergodicity for delayed-acceptance kernels.}

\textit{Data availability:} Data sharing is not applicable to this article as no new data were created or analysed in this study.

\bibliographystyle{apalike}

\bibliography{davb}

\appendix

\section{Proofs of results}
\label{sect.proofs}

\subsection{Proofs of results in Section \ref{sect.generic}}
\subsubsection{Proof of Lemma \ref{lemma.general}}
\label{sect.prove.main}

Since $\epsilon<\kappa_A$ we may define $\beta\in (0,1)$ such that 
$\epsilon=(1-\beta)\kappa_{A}$.
For any $\setA\in \setF$,
\begin{eqnarray*}
\int_{\setA^c}q_B(x,y)\alpha_{B}(x,y)\md y
&\ge&
\int_{\setA^c\cap  \setD(x)}q_B(x,y)\alpha_{B}(x,y) \md y\\
&\ge&
\delta\int_{\setA^c\cap  \setD(x)}q_A(x,y)\alpha_{A}(x,y) \md y
~~\mbox{by \eqref{eqn.genabscts}}.\\
&=&
\delta\left[
\int_{\setA^c}q_A(x,y)\alpha_{A}(x,y) \md y
-
\int_{\setA^c\cap  \setD(x)^c}q_A(x,y)\alpha_{A}(x,y) \md y
\right]\\
&\ge&
\delta\left[
\int_{\setA^c}q_A(x,y)\alpha_{A}(x,y) \md y
-
\int_{\setD(x)^c}q_A(x,y) \alpha_A(x,y)\md y
\right]\\
&\ge&
\delta\left[\int_{\setA^c}q_A(x,y)\alpha_{A}(x,y) \md
  y-(1-\beta)\kappa_{A} \right]~~\mbox{by
  \eqref{eqn.genlocal}}.
\end{eqnarray*}
Integrating both sides over $x \in \setA$ with respect to $\pi$ gives
\begin{eqnarray*}
\pi(\setA)\kappa_{B}(\setA)&=&
\int_{x\in \setA}\int_{y\in \setA^c}\pi(\md x)q_B(x,y)\alpha_{B}(x,y)\md y\\
&\ge&
\delta
\left[\int_{x\in \setA}\int_{y\in\setA^c}\pi(\md
  x)q_A(x,y)\alpha_{A}(x,y) \md y-(1-\beta)\kappa_{A}\int_{x\in
    \setA}\pi(\md x) \right]\\
&=&
\delta
\left[\pi(\setA)\kappa_A(\setA)-(1-\beta)\pi(\setA)\kappa_{A}\right]\\
&\ge&
\beta \delta\pi(\setA)\kappa_{A}.
\end{eqnarray*}
The result follows since only sets with $\pi(\setA)>0$ are relevant. 
 $\square$

\subsubsection{Proof of Proposition \ref{prop.TMALA.conditions}}
Let $\nu(x):=\frac{1}{2}\lambda^2\nabla\log \pi(x)$.\\
(A) (i) 
$\Norm{Y-x}=\Norm{\nu(x)+\lambda Z}\le
\frac{1}{2}\lambda^2D+\lambda\Norm{Z}$, 
where $Z$ is a vector with iid $\mathsf{N}(0,1)$ components. So 
$\Prob{\Norm{Y-x}>r}\le
\Prob{\lambda\Norm{Z}+\frac{1}{2}\lambda^2D>r}\rightarrow 0$ as
$r\rightarrow \infty$.\\
(ii) 
Algebra shows that
\begin{equation}
\label{eqn.deltaq}
\Delta(x,y;q)
=\frac{1}{2\lambda^2}\left\{
2(x-y)\cdot[\nu(x)+\nu(y)]+\Norm{\nu(x)}^2-\Norm{\nu(y)}^2
\right\}.
\end{equation}
Since $\Norm{\nu(x)}\le \lambda^2D/2$,  
 $|\Delta(x,y;q)|\le h(\Norm{y-x}):=D\Norm{y-x} +
\lambda^2D^2/8$, as required.\\
(B) 
Let $Z_*=Z\cdot\nabla \log \pi(x)/\Norm{\nabla \log \pi}\sim N(0,1)$. 
For any $D>0$ we may find $\setA_D\in \setF$ with $\pi(\setA_D)>0$ and 
$\Norm{\nabla\log  \pi(x)}\ge D$ for all $x\in\setA_D$. Hence, for
$x\in\setA_D$,
\[
\Norm{Y-x}=\Norm{\frac{1}{2}\lambda^2\nabla \log \pi(x)+\lambda Z}
\ge \frac{1}{2}\lambda^2\Norm{\nabla \log
  \pi(x)}-\lambda|Z_*|
\ge 
\frac{1}{2}\lambda^2D
-\lambda|Z_*|.
\]
So for any $r>0$, 
$\Prob{\Norm{Y-x}\ge r}\ge
\Prob{|Z_*|\le \frac{1}{2}\lambda D-r/\lambda}$
, which can be made as close to $1$ as desired by taking $D$ to be
sufficiently large. $\square$



\subsubsection{Proof of Theorem \ref{thrm.sideways}}
Since $\kappa_A>0$ and $q_A$ is uniformly local, we may take $\setD(x)=\Bbar(x,r)$, the closure
of $B(x,r)$, where $r$ is chosen so as to satisfy
\eqref{eqn.localA} for $\pi$-almost all $x$, and with
$\epsilon=\kappa_A/2$. 

Next, let
$t=0\wedge (c+b) - 0\wedge (c+a)$.
 Since $0\wedge (c+a)$ is upper bounded by both $0$ and $c+a$, 
if $0<c+b$ then $t\ge 0$, and if
$c+b<0$ then $t\ge b-a$. Thus $t\ge 0\wedge (b-a)$. Hence   
for $y\in \setD(x)$, 
\begin{eqnarray*}
\log \frac{q_B(x,y)\alphaMH_B(x,y)}{q_A(x,y)\alphaMH_A(x,y)}
&=&
\log q_B(x,y) - \log q_A(x,y)\\
&&+ 0 \wedge [\log (\pi(y)/\pi(x)) + \Delta(x,y;q_B)] 
-  0 \wedge [\log (\pi(y)/\pi(x)) + \Delta(x,y;q_A)]\\
&\ge&
\log q_B(x,y) - \log q_A(x,y) + 0 \wedge [\log \Delta(x,y;q_B)-\log
  \Delta(x,y;q_A)]\\
&\ge&
\log q_B(x,y) - \log q_A(x,y) -h(r).
\end{eqnarray*}
The first term is bounded on $\setD(x)$ since 
$\log q_B(x,y) - \log q_A(x,y)$ is continuous, and so 
\eqref{eqn.genabscts} holds and we may apply Lemma
\ref{lemma.general}. Repeat with $A\leftrightarrow B$.  $\square$


\subsection{Proofs of results in Section \ref{sect.backg}}
\label{sec.proof.background}
\subsubsection{Proof of Proposition \ref{prop.GEtonoGEbad}}
Since $\PMH$ is geometrically ergodic, it must have 
a left spectral gap. From \eqref{eqn.var.rep.of.gaps},
\[
\mbox{Gap}_L(P)=1+\inf_{f\in L_0^2(\pi),\braket{f,f}=1}\int \pi(\md x)
P(x,\md y)f(x)f(y)=2-\sup_{f\in L_0^2(\pi),\braket{f,f}=1}\mathcal{E}_P(f),~~~
\]
where the Dirichlet form for the functional $f$ of the Markov chain is
\[
\mathcal{E}_P(f)=\frac{1}{2}\int \pi(\md x)P(x,\md y)[f(x)-f(y)]^2
=
\frac{1}{2}\int \pi(\md
x)q(x,y)\alpha(x,y)[f(x)-f(y)]^2\md y,
\]
for a propose-accept-reject chain.

Since $\alphaDA(x,y)\le\alphaMH(x,y)$, 
$\mathcal{E}_{\PDA}(f)\le \mathcal{E}_{\PMH}(f)$, So 
if $\PMH$ is geometrically ergodic, 
then $\mbox{Gap}_L(\PDA)\ge \mbox{Gap}_L(\PMH)>0$. The result follows as all geometrically ergodic kernels are variance bounding and the only kernels which are variance bounding but not geometrically ergodic have no left-spectral gap, yet we have just shown that $\PDA$ must have a left spectral gap because $\PMH$ is geometrically ergodic.  $\square$

\subsubsection{Proof of Example \ref{example.MHIS}}
Since $\alphaMH(x,y)=1$, the MH algorithm produces iid samples from $\pi$ and so
it is geometrically ergodic (with a spectral gap of $1$) and, hence,
variance bounding . For the DA
algorithm,
\[
\alphaDA(x,y)=\left[1\wedge e^{(k-1)(x-y)}\right]\left[1\wedge e^{(k-1)(y-x)}\right].
\]
For any $r>0$ let 
\[
\setA_r:=\left[{2r},\infty\right),~\setB_r:=\left(0,{r}\right),~\mbox{and}~
  \setC_r:=\left[{r},{2r}\right).
\]
For $(x,y)\in \setA_r\times\setB_r$, $\alphaDA(x,y)\le e^{-|k-1|r}$,
whilst
for $(x,y)\in \setA_r\times\setC_r$, $\alphaDA(x,y)\le 1$. Also,
 $\int_{\setB_r}q(x,y)\md y\le 1$,
whilst 
$\int_{\setC_r}q(x,y)\md y=e^{-kr}-e^{-2kr}\le e^{-kr}$.
Therefore, for $r>\log 2$ (so $\pi(\setA_r)<1/2$) the flow out of $\setA_r$ satisfies
\begin{eqnarray*}
\pi(\setA_r)\kappa_{DA}(\setA_r)
&=&\int_{x\in\setA_r,y\in\setB_r}\pi(x)q(x,y)\alphaDA(x,y)\md y\md x
+
\int_{x\in\setA_r,y\in\setC_r}\pi(x)q(x,y)\alphaDA(x,y)\md y\md x\\
&\le&
e^{-|k-1|r}\int_{x\in\setA_r,}\pi(x)\md x
+
e^{-kr}\int_{x\in\setA_r,}\pi(x)\md x =
\left(e^{-|k-1|r}+e^{-kr}\right)\pi(\setA_r).
\end{eqnarray*}
So, for any $\epsilon>0$ $\exists r$ such that
$\kappa_{DA}(\setA_r)<\epsilon$, and the conductance of the chain is
therefore $0$; the chain is not variance bounding. The lack of geometric ergodicity follows from Proposition \ref{prop.GEtonoGEbad}. $\square$

\subsection{Proofs of results in Section \ref{sect.results}}
\label{sec.proof.results}


\subsubsection{Shorthand for delayed-acceptance kernels}
The following short-hand is used through the  remainder of this section.
\begin{eqnarray}
\label{eqn.sdef}
b_1(x,y)&:=&\log \rDA_1=
\left[\log \pihat(y)-\log \pihat(x)\right]+\Delta(x,y;q),\\
b_2(x,y)&:=&\log \rDA_2=
\left[\log
  \pi(y)-\log \pi(x)\right]-\left[\log \pihat(y)-\log
  \pihat(x)\right].
\end{eqnarray}

\subsubsection{Proof of Theorem \ref{thrm.heavy}}
For any $\epsilon>0$, by \eqref{eqn.localA}, choose $r_\epsilon$ such that 
$\int_{B(x,r_\epsilon)^c}q(x,y)\md y\le \epsilon$; set
$\setD(x):=B(x,r_\epsilon)$  so \eqref{eqn.small.outside} holds.

The `heavier-tail' condition \eqref{eqn.heavy} is equivalent to
$b_1(x,y)+\Delta(x,y;q)\ge 0\Rightarrow b_2(x,y)\ge 0$. 
Applying the identity $[b_1(y,x),b_2(y,x),\Delta(y,x;q)]=-[b_1(x,y),b_2(x,y),\Delta(x,y;q)]$ and relabelling $x$ and $y$ then gives 
$b_1(x,y)+\Delta(x,y;q)\le 0\Rightarrow b_2(x,y)\le 0$.

Next, suppose that $\Norm{x}>r_*$, $\Norm{y}>r_*$ and $y\in\setD(x)$
but $y\notin \setC(x)$, so that
 $b_1(x,y)$ and $b_2(x,y)$ have opposite signs. By
\eqref{eqn.linear.q} $|\Delta(x,y;q)|\le h(r_\epsilon)$, and 
the implications
 derived in the previous paragraph then imply that
 both
$|b_1(x,y)|\le h(r_\epsilon)$ and $|b_2(x,y)|\le h(r_\epsilon)$. Thus
$\setD(x)\cap \setM_{h(r_\epsilon)}(x)\subseteq \setC(x)$.

Finally, let $\Bbar(x,r)$ be the closure of $B(x,r)$, let 
$\mathcal{D}:=\Bbar(0,r_\epsilon+r_*)\times \Bbar(0,2r_\epsilon+r_*)$ and 
let $m_*:=\sup_{(x,y)\in\mathcal{D}}|b_2(x,y)|$;
 since $b_2(x,y)$ is
continuous and $\mathcal{D}$ is compact, $m_*<\infty$.
For $x\in \Bbar(0,r_\epsilon+r_*)$ and $y \in \setD(x)$,
$(x,y)\in\mathcal{D}$ and so $|b_2(x,y)|\le m_*$. For 
$x\in \Bbar(0,r_\epsilon+r_*)^c$ and $y \in \setD(x)$, 
$\Norm{x}>r_*$ and $\Norm{y}>r_*$
and, from the previous paragraph, $\setD(x)\cap \setM_{h(r_\epsilon)}(x)\subseteq \setC(x)$.
 Hence 
 \eqref{eqn.nice.inside} holds with $m=\max(h(r_\epsilon),m_*)$, and the
 result follows from the proof of Corollary \ref{cor.gen.da}. $\square$

\subsubsection{Proof of Proposition \ref{prop.TMALA.GE}}
Let $q(x,y)$ and $q_{\rm RWM}(x,y)$ the be proposal densities for
the Metropolis-Hastings and RWM algorithms, respectively, let
$\alpha(x,y)$ and $\alpha_{\rm RWM}(x,y)$ be the corresponding
acceptance probabilities, and let $v_*:=\mbox{ess sup}~ \Norm{v(x)}$.

Firstly, if $\int_{B(x,r)^c}q_{\rm RWM}(x,y)\md y< \epsilon$ then
$\int_{B(x,r+v_*)^c}q(x,y)\md y< \epsilon$; since $q_{\rm RWM}$ is
uniformly local, so, therefore, is $q$. Next, algebra shows that 
\[
\log q(x,y)-\log q_{\rm RWM}(x,y)=\frac{1}{2\lambda^2}\left[2(y-x)\cdot v(x)+\Norm{v(x)}^2\right].
\]
Now consider $y\in B(x,r)$
and apply the triangle inequality to obtain, 
\[
\Abs{\log q(x,y)-\log q_{\rm RWM}(x,y)}\le \frac{1}{2\lambda^2}\left[2rv_*+v_*^2\right]=:m(r).
\]
Thus $q(x,y)\ge e^{-m(r)}q_{\rm RWM}(x,y)$ and 
$q_{\rm RWM}(x,y)\ge e^{-m(r)}q(x,y)$. 
 Also
$q(x,y)\alpha(x,y)=q(x,y)\wedge
 [q(y,x)\pi(y)/\pi(x)]\ge e^{-m(r)}q_{\rm RWM}(x,y)\alpha_{\rm RWM}(x,y)$
 and, similarly, 
$q_{\rm RWM}(x,y)\alpha_{\rm RWM}(x,y)\ge  e^{-m(r)}q(x,y)\alpha(x,y)$. 
Both implications then follow
 from Theorem \ref{thrm.main} with $\setD(x)=B(x,r)$ and with $r$
 chosen so that both $\int_{B(x,r)^c}q_{\rm RWM}(x,y)\md y\le \epsilon$ and
$\int_{B(x,r)^c}q(x,y)\md y\le \epsilon$.  $\square$

\subsubsection{Proof of Example \ref{ex.TMALAb}}
Let $\PMH_{\rm RWM}$ be the RWM kernel using a Gaussian proposal and targeting
$\pi$, as in Proposition \ref{prop.TMALA.GE}.
Applying Proposition \ref{prop.TMALA.GE} twice shows that $\PMH_{\rm TMALA}$
is variance bounding if and only if $\PMH_{\rm RWM}$ is variance bounding, 
which occurs if and only if $\PMH_{\rm hyp}$ is variance bounding.
The sufficiency of (i) then arises directly from Corollary
\ref{cor.growdiscrep}. For (ii), by Proposition
\ref{prop.TMALA.conditions} A(ii),
applied to the proposal $\qhat$, we
may use Theorem \ref{thrm.heavy}. Geometric ergodicity then follows from Proposition \ref{prop.GEtonoGEbad}. $\square$

\subsubsection{Proofs of Examples \ref{example.MALAa} and \ref{ex.MALAb}}
Let the proposal be 
 $Y=x-\frac{1}{2}\lambda^2\xi x^{\xi-1}+\lambda Z$, where 
$Z\sim\mathsf{N}(0,1)$. Example \ref{example.MALAa} uses $\xi=\beta$
and Example \ref{ex.MALAb} uses $\xi=\gamma$. Now $\nu(x)=-\xi\lambda^2
x^{\xi-1}/2$, so, from \eqref{eqn.deltaq},
\begin{equation}
\label{eqn.deltaqxi}
\Delta^{(\xi)}(x,y;q)=-\frac{\xi}{2}(x-y)(x^{\xi-1}+y^{\xi-1})+\frac{\lambda^2\xi^2}{8}(x^{2\xi-2}-y^{2\xi-2}).
\end{equation}
Also
$Y^k=x^{k}\left(1-\frac{1}{2}\lambda^2\xi x^{\xi-2}+\lambda Z/x\right)^k$. 
Hence, for $k>0$,
\begin{eqnarray*}
\xi<2&\Rightarrow&
U_x:=Y^{\xi-1}/x^{\xi-1}\cip 1, \\
\xi\ge1&\Rightarrow&
V_{x,k}:=\frac{x^k-Y^k}{x^{\xi+k-2}}\cip \frac{1}{2}k\xi\lambda^2-kZ\mathds{1}_{(\xi=1)},
\end{eqnarray*}
as $x\rightarrow \infty$, and where here, and throughout this proof
$\cip$ indicates convergence in probability. 
Now
\[
\Delta^{(\xi)}=
-\frac{1}{2}\xi x^{\xi-1}
\left(\frac{1}{2}\lambda^2\xi x^{\xi-1}-\lambda Z\right)(1+U_x)
+\frac{1}{8}\lambda^2\xi^2x^{3\xi-4}V_{x,2\xi-2}.
\]
However, if $\xi<2$, $x^{3\xi-4}/x^{2\xi-2}\rightarrow 0$ as $x\rightarrow
\infty$, so, for $1\le\xi<2$,
\[
T_{x,\xi}:=\frac{\Delta^{(\xi)}}{x^{2\xi-2}}\cip
 -\frac{1}{2}\xi^2\lambda^2+\xi\lambda Z\mathds{1}_{(\xi=1)},
\]
as $x\rightarrow \infty$. 
Finally, 
\[
b_1(x,Y)=
x^{\xi+\gamma-2}V_{x,\gamma}+ x^{2\xi-2}T_{x,\xi}
~~~\mbox{and}~~~
b_2(x,Y)=
 x^{\xi+\beta-2}V_{x,\beta}-x^{\xi+\gamma-2}V_{x,\gamma}.
\]
\textit{Example \ref{example.MALAa}} ($\xi=\beta$). Consider the
behaviour of $b_1(x,Y)$ and $b_2(x,Y)$ as $x\rightarrow \infty$.
If  $1\le\gamma<\beta$, $b_1$ is dominated by
$x^{2\beta-2}T_{x,\beta}$ and so $b_1\cip -\infty$. 
If $1\le\beta<\gamma$, $b_1$ is dominated by
$x^{\beta+\gamma-2}V_{x,\gamma}$ and $b_2$ is dominated by
$-x^{\beta+\gamma-2}V_{x,\gamma}$; thus, when $\beta>1$, $b_2\cip -\infty$,
and when $\beta=1$, either $b_1\cip -\infty$ or $b_2\cip -\infty$,
depending on the value of $Z$.
In either case, $\alphaDA(x,Y)\cip 0$ as
$x\rightarrow \infty$. 
Given $\epsilon>0$, we choose $x_*$ such that for
all $x>x_*$
$\Prob{\alphaDA(x_*,Y)>\epsilon}<\epsilon$ and set
$\setA_x:=[x,\infty)$, so that $\kappa(\setA_{x_*})<2\epsilon$.
 But $\epsilon$   can be made as
  small as desired, so $\kappa_{DA}=0$. \\
\textit{Example \ref{ex.MALAb}} ($\xi=\gamma$).
When $1\le\beta<\gamma$, as $x\rightarrow \infty$, $b_2$ is dominated
by $-x^{2\gamma-2}V_{x,\gamma}\cip-\infty$ so $\kappa_{DAb}=0$, by an
analogous argument to that for Example \ref{example.MALAa}.\\
If $1\le\gamma<\beta$ then 
we note that the proof of geometric
ergodicity of the MALA algorithm in Theorem 4.1 of \cite{RobTweed1996MALA} applies to
any algorithm with a proposal of the form $q(x,y)\propto
\exp[-|y-[x+\nu(x)]|^2/2\lambda^2]$. For an irreducible and aperiodic
kernel with a continuous proposal density, such as the one under consideration, geometric ergodicity is therefore
guaranteed provided the following two conditions are satisfied:
\[
\eta:=\lim_{|x|\rightarrow \infty}\inf(|x|-|x+\nu(x)|)>0
~~~\mbox{and}~~~
\lim_{|x|\rightarrow \infty}\int_{\mathsf{R}(x)\cap
  \mathsf{I}(x)}q(x,y)\md y = 0,
\]
where $\mathsf{I}(x):=\{y:|y|\le |x|\}$ is the interior and
$\mathsf{R}(x):=\{y:\alphaMH(x,y)<1\}$ is the region where a rejection
is possible.
Now, $x+\nu(x)=x-\gamma x^{\gamma-1}$ so $\gamma \ge 1\Rightarrow \eta>0$. We
will show that if $\gamma \in [1,2)$, for $x>1$ and $y\in \mathsf{I}(x)$, both $b_1(x,y)\ge 0$ and
$b_2(x,y)\ge 0$, so that $\alphaDA_{b}(x,y)=1$ and hence
$\mathsf{R}(x)\cap\mathsf{I}(x)$ is empty.

Now
$b_2(x,y)=x^{\gamma}(x^{\beta-\gamma}-1)+y^{\gamma}(y^{\beta-\gamma}-1)$,
so if $x\ge y$ and $x\ge 1$ then $b_2(x,y)\ge 0$. Further, from
\eqref{eqn.deltaqxi},
\[
b_1(x,y)=x^\gamma-y^\gamma-\frac{\gamma}{2}(x-y)(x^{\gamma-1}+y^{\gamma-1})
+\frac{\lambda^2\gamma^2}{8}(x^{2\gamma-2}-y^{2\gamma-2}).
\]
The final term is non-negative when $x\ge y$. Directly from the
concavity 
of $f(t)=\gamma t^{\gamma-1}$, we
obtain
\[
x^{\gamma}-y^{\gamma}=\int_y^xf(t)\md t \ge (x-y)\frac{f(x)+f(y)}{2} = \frac{\gamma}{2}(x^{\gamma-1}+y^{\gamma-1}),
\]
so the
sum of the first two terms is also non-negative when $x\ge y$. Hence,
for $x\ge 1$, $\mathsf{I}(x)\cap\mathsf{R}(x)$ is empty, as claimed. $\square$

\subsubsection{Proof of Example \ref{ex.TMALA.not.GE}}
Consider any proposal of the form $Y=x+\lambda^2\nu(x)+\lambda Z$, where $Z\sim \mathsf{N}(0,1)$ and $|\nu(x)|\le \nu_*$ for all $x$.
Firstly,
\[
\log \pihat(Y)-\log \pihat(x) = \frac{1}{\gamma}(x^\gamma-Y^{\gamma}).
\]
Secondly, as $x\rightarrow \infty$,
\begin{align}
\Prob{Y>x/2}=\Prob{\lambda Z>-x/2-\lambda^2 \nu(x)}>\Prob{\lambda Z>-x/2+\lambda^2\nu_*}\rightarrow 1.
\end{align}
Also, for any $\delta>0$,
\begin{align}
\Prob{|Y-x|\ge \lambda \delta}=\Prob{|\lambda^2\nu(x)+\lambda Z|\ge \delta}\ge 1-2\delta/\sqrt{2\pi}.
\end{align}
The intermediate value theorem supplies:
$\log \pihat(Y)-\log \pihat(x)=\eta^{\gamma-1}(Y-x)$ for some $\eta\ge \min(x,Y)$. Given any $\epsilon>0$, set $\delta=\sqrt{2\pi}\epsilon/4$ and choose $x_*$ such that $\Prob{Y>x/2}>1-\epsilon/2$ for all $x>x_*$. Then with probability at most $\epsilon$, $|\log \pihat(Y)-\log \pihat(x)|>(x/2)^{\gamma-1}\{|Y-x|\vee \delta\}$. Next,
\begin{align*}
\Delta(x,Y) &= \log q(Y,x)-\log q(x,Y)
=
\frac{1}{2\lambda^2}\left[\lambda^4\nu(x)^2-\lambda^4\nu(y)^2-2\lambda (Y-x)\{\lambda^2\nu(x)+\lambda^2\nu(y)\}\right]\\
&\le 
\frac{1}{2\lambda^2}\left[\lambda^4\nu_*^2-4\lambda^3 (Y-x)\nu_*\right].
\end{align*}
Hence, $|\log \pihat(Y)-\log \pihat(x)|\rightarrow \infty$ and dominates $\Delta(x,Y)$ in probability as $x\rightarrow \infty$; \emph{i.e.}, $\log r_1(x,Y)\sim \log \pihat(Y)-\log \pihat(x)$ and $|\log r_1(x,Y)|\rightarrow \infty$ in probability as $x\rightarrow \infty$.

After some algebra,
\[
\log r_2(x,Y) = \frac{1}{\gamma}Y^\gamma\left(1-\frac{\gamma}{\beta Y^{\gamma-\beta}}\right)-\frac{1}{\gamma}x^\gamma\left(1-\frac{\gamma}{\beta x^{\gamma-\beta}}\right).
\]
Since $\gamma>\beta$, as $x$ (and hence, $Y$), becomes large, we have
\[
\log r_2(x,Y) \sim - \left\{\log \pihat(Y)-\log \pihat(x)\right\}
\]
in probability as $x\rightarrow \infty$.

Combining these two ideas, $0\wedge \log r_1(x,Y) + 0\wedge \log r_2(x,Y)\rightarrow -\infty$ in probability.  Thus $\mathsf{ess}~\sup_x P(x,\{x\})=1$ and the algorithm cannot be geometrically ergodic by Theorem 5.1 of \cite{RobTweed1996RWM}; by Proposition \ref{prop.GEtonoGEbad} it also cannot be variance bounding.




\end{document}